\definecolor{myred}{RGB}{255,50,50}         
\definecolor{myblack}{RGB}{0,0,0}           
\newtheorem{theorem}{Theorem}[section]
\newtheorem{lemma}[theorem]{Lemma}
\newtheorem{definition}[theorem]{Definition}
\newtheorem{corollary}[theorem]{Corollary}
\newtheorem{proposition}[theorem]{Proposition}
\numberwithin{equation}{section}
\newcommand{\grad}{\nabla}                       
\renewcommand{\implies}{\Rightarrow}             
\newcommand{\inner}[2]{\langle#1,#2\rangle}      
\newcommand{\interior}{\mathrm{int}\,}           
\renewcommand{\Re}{\mathbb{R}}                   
\newcommand{\T}{\top\hspace{-1pt}}               
\renewcommand{\S}{\mathcal{S}}                    
\newcommand{\tr}{\mathrm{tr}}                    
\newcommand{\jProd}[2]{ {#1 \circ #2 } }		 
\newcommand{\PSDcone}[1]{{\mathcal{S}^{#1}_+}}	 
\newcommand{\tanCone}[2]{ {\mathcal{T}_{#2}(#1)}}	 
\newcommand{\matRank}{{\mathrm{ rank } \,}}	
\newcommand{\lineality}{\mathrm{lin}\,}
\renewcommand{\doteq}{:=}
\begin{document}


\title{Optimality Conditions for Nonlinear Semidefinite Programming via 
  Squared Slack Variables%
  \thanks{This work was supported by Grant-in-Aid for Young
    Scientists (B) (26730012) and for Scientific Research (C)
    (26330029) from Japan Society for the Promotion of Science.} 
}
\author{
  Bruno F. Louren\c{c}o%
  \thanks{Department of Mathematical and Computing Sciences,
Tokyo Institute of Technology,
2-12-1-W8-41 Ookayama, Meguro-ku, Tokyo 152-8552, Japan 
	(\texttt{flourenco.b.aa@m.titech.ac.jp)}.}
  \and   
  Ellen H. Fukuda%
  \thanks{Department of Applied Mathematics and Physics, 
    Graduate School of Informatics, 
    Kyoto University, Kyoto 606-8501, Japan
    (\texttt{ellen@i.kyoto-u.ac.jp}).}
  \and 
  Masao Fukushima%
  \thanks{
    Department of Systems and Mathematical Science, 
    Faculty of Science and Engineering,
    Nanzan University, Nagoya, Aichi 486-8673, Japan
    \texttt{fuku@nanzan-u.ac.jp}
	}
}

\maketitle

\begin{abstract}
  In this work, we derive second-order optimality conditions for
  nonlinear semidefinite programming (NSDP) problems, by reformulating it
  as an ordinary nonlinear programming problem using squared slack
  variables. We first consider the correspondence between
  Karush-Kuhn-Tucker points and regularity conditions for the general
  NSDP and its reformulation via slack variables. Then, we obtain a
  pair of ``no-gap'' second-order optimality conditions that are
  essentially equivalent to the ones already considered in the
  literature. We conclude with the analysis of some computational
  prospects of the squared slack variables approach for~NSDP.
  
  \noindent \textbf{Keywords:} Nonlinear semidefinite programming,
  squared slack variables, optimality conditions, second-order conditions.
\end{abstract}


\section{Introduction}

We consider the following \emph{nonlinear semidefinite programming} (NSDP)
problem:
\begin{equation}
  \label{eq:sdp}
  \tag{P1}
  \begin{array}{ll}
    \underset{x}{\mbox{minimize}} & f(x) \\ 
    \mbox{subject to} & G(x) \in \PSDcone{m},
  \end{array}
\end{equation}
where $f \colon \Re^n \to \Re$ and $G \colon \Re^n \to \S^m$ are twice
continuously differentiable functions, $\S^m$ is the linear space of
all real symmetric matrices of dimension $m \times m$, and $\S^m_+$ is
the cone of all positive semidefinite matrices in
$\S^m$. \emph{Second-order optimality conditions} for such problems
were originally derived by Shapiro in~\cite{shapiro97}. It might be
fair to say that the second-order analysis of NSDP problems is more
intricated than its counterpart for classical nonlinear programming
problems. That is one of the reasons why it is interesting to have
alternative ways for obtaining optimality conditions
for~\eqref{eq:sdp}; see the works by Forsgren~\cite{FA00} and
Jarre~\cite{Jarre12}. In this work, we propose to use the
\emph{squared slack variables} approach for deriving these optimality
conditions.

It is well-known that the squared slack variables can be used to
transform a \emph{nonlinear programming} (NLP) problem with inequality
constraints into a problem with only equality constraints. For NLP
problems, this technique was hardly considered in the literature
because it increases the dimension of the problem and may lead to
numerical instabilities~\cite{Rob76}. However, recently, Fukuda and
Fukushima~\cite{FF14} showed that the situation may change in the
\emph{nonlinear second-order cone programming} context. Here, we
observe that the slack variables approach can be used also for NSDP
problems, because, like the nonnegative orthant and the
second-order cone, the cone of positive semidefinite matrices is also
a cone of squares. More precisely, $\S^m_+$ can be represented as
\begin{equation}
  \label{eq:sdp_cone}
  \S^m_+ = \{ Z \circ Z \mid Z \in \S^m \},
\end{equation}
where $\circ$ is the \emph{Jordan product} associated with the space
$\S^m$, which is defined as
\begin{displaymath}
  W \circ Z \doteq \frac{WZ + ZW}{2}
\end{displaymath}
for any $W, Z \in \S^m$. Note that actually $Z \circ Z = ZZ = Z^2$ for
any $Z \in \S^m$. 

The fact above allows us to develop the squared slack variables
approach. In fact, by introducing a slack variable $Y \in \S^m$ in
\eqref{eq:sdp}, we obtain the following problem:
\begin{equation}
  \label{eq:sdps}
  \tag{P2}
  \begin{array}{ll}
    \underset{x,Y}{\mbox{minimize}} & f(x) \\
    \mbox{subject to} & G(x) - Y \circ Y = 0,
  \end{array}
\end{equation}
which is an NLP problem with only equality constraints. Note
that if $(x,Y) \in \Re^n \times \S^m$ is a global (local) minimizer
of~\eqref{eq:sdps}, then $x$ is a global (local) minimizer
of~\eqref{eq:sdp}. Moreover, if $x \in \Re^n$ is a global (local)
minimizer of~\eqref{eq:sdp}, then there exists $Y \in \S^m$ such that
$(x,Y)$ is a global (local) minimizer of~\eqref{eq:sdps}.  However,
the relation between stationary points, or \emph{Karush-Kuhn-Tucker}
(KKT) points, of \eqref{eq:sdp} and \eqref{eq:sdps} is not so
trivial. As in~\cite{FF14}, we will take a closer look
at this issue, and investigate also the relation between constraint
qualifications for \eqref{eq:sdp} and~\eqref{eq:sdps}, 
using second-order conditions.

We remark that second-order conditions for these two problems are vastly
different. While \eqref{eq:sdps} is a run-of-the-mill nonlinear
programming problem, \eqref{eq:sdp} has nonlinear conic constraints,
which are more difficult to deal with. Moreover, it is known that
second-order conditions for NSDPs include an extra term, which takes
into account the curvature of the cone. For more details, we refer to
the papers of Kawasaki \cite{Kawasaki88}, Cominetti
\cite{Cominneti1990} and Shapiro \cite{shapiro97}. The main objective
of this work is to show that, under appropriate regularity conditions,
second-order conditions for \eqref{eq:sdp} and \eqref{eq:sdps} are
essentially the same. This suggests that the addition of the slack
term already encapsulates most of the nonlinear structure of the
cone. In the analysis, we also propose and use a sharp
characterization of positive semidefiniteness that takes into account
the rank information. We believe that such a characterization can be
useful in other contexts as well.

Finally, we present results of some numerical experiments where NSDPs are
reformulated as NLPs using slack variables. Note that we are not necessarily
advocating the use of slack variables and we are, in fact, driven by
curiosity about its computational prospects. Nevertheless, there are 
a couple of reasons why this could be interesting. First of all, conventional
wisdom would say that using squared slack variables is not a good
idea, but, in reality, even for linear SDPs there are good reasons to
 use such variables. In \cite{BM03,BM05}, Burer and
Monteiro transform a linear SDP $\inf \{\tr(CX) \mid \mathcal{A}X = b,
X \in \PSDcone{m} \} $  into $\inf \{\tr(CVV^\T) \mid
\mathcal{A}VV^\T = b\}$, where $V$ is a square matrix and $\tr$ denotes
the trace map. The idea is to use a theorem, proven independently by
Barvinok \cite{Barvinok95} and Pataki \cite{Pataki98}, which bounds
the rank of possible optimal solutions. By doing so, it is
possible to restrict $V$ to be a rectangular matrix instead of a
square one, thereby reducing the number of variables. Another reason to
use squared slack variables is that the reformulated NLP problem can
be solved by efficient NLP solvers that are widely available. In
fact, while there are a number of solvers for linear SDPs, as we move
to the general nonlinear case, the situation changes
drastically~\cite{HYY15}.

Throughout the paper, the following notations will be used. For $x \in
\Re^s$ and $Y \in \Re^{\ell \times s}$,  $x_i \in \Re$ and $Y_{ij}
\in \Re$ denote the $i$th entry of $x$ and the $(i,j)$ entry ($i$th row
and $j$th column) of $Y$, respectively. The identity matrix of
dimension $\ell$ is denoted by $I_\ell$. The transpose, the
Moore-Penrose pseudo-inverse, and the rank of $Y \in \Re^{\ell \times s}$
are denoted by $Y^\T \in \Re^{s \times \ell}, Y^{\dagger} \in \Re^{s
  \times \ell}$, and $\matRank Y$, respectively. If $Y$ is a square
matrix, its trace is denoted by $\tr(Y) \doteq \sum_{i} Y_{ii}$. 
For square matrices $W$ and $Y$ of the same dimension, their inner
product\footnote{Although we will also use $\inner{\cdot}{\cdot}$ to denote the inner product in 
$\Re^n$, there will be no confusion.} is denoted by $\inner{W}{Y} \doteq \tr (W^\T Y)$. We will use the
notation $Z \succeq 0$ for $Z \in \PSDcone{m}$. In that case, 
we will denote by $\sqrt{Z}$ the positive semidefinite square root of 
$Z$, that is, $\sqrt{Z}$ satisfies $\sqrt{Z} \succeq 0$ and $\sqrt{Z} ^2 = Z $.
For any function $\mathcal{P} \colon \Re^{s+\ell} \to \Re$, the
gradient and the Hessian at $(x,y) \in \Re^{s+\ell}$ with respect to
$x$ are denoted by $\grad_x \mathcal{P}(x,y)$ and $\grad_x^2
\mathcal{P}(x,y)$, respectively. Moreover, for any linear operator
$\mathcal{G} \colon \Re^s \to \S^\ell$ defined by $\mathcal{G}v =
\sum_{i=1}^s v_i \mathcal{G}_i$ with $\mathcal{G}_i \in \S^\ell$,
$i=1,\dots,s$, and $v \in \Re^s$, the adjoint operator $\mathcal{G}^*:
\S^\ell \to \Re^s$ is defined by
\begin{displaymath}
  \mathcal{G}^* Z = (\inner{\mathcal{G}_1}{Z}, \dots,
  \inner{\mathcal{G}_s}{Z})^\T, \quad Z \in \S^\ell.
\end{displaymath}
Given a mapping $\mathcal{H} \colon \Re^s \to \S^\ell$, its derivative
at a point $x \in \Re^s$ is denoted by $\grad \mathcal{H}(x) \colon
\Re^s \to \S^\ell$ and defined by
\begin{displaymath}
  \grad \mathcal{H}(x) v = \sum_{i=1}^s v_i \frac{\partial
    \mathcal{H}(x)}{\partial x_i}, \quad v \in \Re^s,
\end{displaymath}
where $\partial \mathcal{H}(x)/\partial x_i \in \S^\ell$ are the partial
derivative matrices. Finally, for a closed convex cone $K$, we will
denote by $\lineality K$ the largest subspace contained in $K$. Note
that $\lineality K = K\cap -K$.

The paper is organized as follows. In Section~\ref{sec:prel}, we
recall a few basic definitions concerning KKT points and second-order
conditions for \eqref{eq:sdp} and \eqref{eq:sdps}. We also give
a sharp characterization of positive semidefiniteness. In
Section~\ref{sec:kkt}, we prove that the original and the reformulated
problems are equivalent in terms of KKT points, under some conditions.
In Section~\ref{sec:cq}, we establish the relation between constraint
qualifications of those two problems. The analyses of second-order
sufficient conditions and second-order necessary conditions are
presented in Sections~\ref{sec:sosc} and~\ref{sec:sonc}, respectively.
In Section~\ref{sec:comp}, we show some  computational
results. We conclude in Section~\ref{sec:conclusion}, with 
final remarks and future works.


\section{Preliminaries}\label{sec:prel}
\subsection{A sharp characterization of positive semidefiniteness}

It is a well-known fact that a matrix $\Lambda  \in \S^m$ is positive semidefinite if and only 
if $\inner{\Lambda }{W^2} \geq 0$ for all $W \in \S^m$. This statement is equivalent 
to the self-duality of the cone $S^m_+$. However, we get no information about the rank of 
$\Lambda $. In the next lemma, we give a new characterization of positive semidefinite 
matrices, which takes into account the rank information.

\begin{lemma}\label{lemma:psd}
Let $\Lambda \in \S^m$. The following statements 
are equivalent:
\begin{enumerate}[(i)]
  \item $\Lambda \in \PSDcone{m}$;
  \item There exists $Y \in \S^m$ such that $\jProd{Y}{\Lambda} = 0$ and $Y \in \Phi(\Lambda)$,
    where
    \begin{equation}\label{eq:lemma:psd}
      \Phi(\Lambda) := \big\{ Y  \in \S^m \,|\, \inner{\jProd{W}{W}}{\Lambda} > 0 \mbox{ for all } 0 \ne W \in \S^m \mbox{ with } \jProd{Y}{W} = 0 \big\}.
    \end{equation} 
\end{enumerate}
For any $Y$ satisfying the conditions in $(ii)$, we have  $\matRank
\Lambda = m - \matRank Y$. Moreover, if $\sigma$ and $\sigma '$ are
nonzero eigenvalues of $Y$, then $\sigma + \sigma ' \neq 0$.
\end{lemma}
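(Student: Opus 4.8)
The plan is to prove the two implications of the equivalence separately, and to obtain the rank formula $\matRank\Lambda = m-\matRank Y$ and the eigenvalue property as byproducts of the proof of $(ii)\Rightarrow(i)$, since both of those claims concern an \emph{arbitrary} $Y$ satisfying $(ii)$.

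For $(i)\Rightarrow(ii)$ I would simply exhibit a concrete $Y$, namely the orthogonal projection onto $\ker Y:=\ker\Lambda$. Working in an orthonormal eigenbasis of $\Lambda$ adapted to the splitting $\Re^m=\mathrm{range}\,\Lambda\oplus\ker\Lambda$, one has $\Lambda=\begin{pmatrix}\Lambda_0 & 0\\ 0 & 0\end{pmatrix}$ with $\Lambda_0\succ 0$ and $Y=\begin{pmatrix}0 & 0\\ 0 & I\end{pmatrix}$, so $Y\Lambda=\Lambda Y=0$ and hence $\jProd{Y}{\Lambda}=0$. For membership in $\Phi(\Lambda)$, note that $\jProd{Y}{W}=0$ forces $W=\begin{pmatrix}W_0 & 0\\ 0 & 0\end{pmatrix}$ with $W_0\in\S^{\matRank\Lambda}$; then $\inner{\jProd{W}{W}}{\Lambda}=\tr(W_0^2\Lambda_0)$, which is positive whenever $W_0\ne 0$ (that is, whenever $W\ne 0$) because $\Lambda_0\succ 0$. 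Hence $Y\in\Phi(\Lambda)$.

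For $(ii)\Rightarrow(i)$ I would fix any $Y$ as in $(ii)$ and pass to an orthonormal eigenbasis of $Y$; since orthogonal conjugation preserves positive semidefiniteness and rank and respects the Jordan product and the inner product, I may assume $Y=\diag(d_1,\dots,d_m)$. Then $\jProd{Y}{\Lambda}=0$ is equivalent to $(d_i+d_j)\Lambda_{ij}=0$ for all $i,j$, and likewise $\jProd{Y}{W}=0$ is equivalent to $(d_i+d_j)W_{ij}=0$ for all $i,j$. To get the eigenvalue statement, suppose $d_a=\sigma$ and $d_b=-\sigma$ with $\sigma\ne 0$ (so $a\ne b$); taking $W=e_ae_b^\T+e_be_a^\T$ with $e_1,\dots,e_m$ the standard basis, one checks $\jProd{Y}{W}=0$ and $\jProd{W}{W}=e_ae_a^\T+e_be_b^\T$, so $\inner{\jProd{W}{W}}{\Lambda}=\Lambda_{aa}+\Lambda_{bb}$; but $(d_a+d_a)\Lambda_{aa}=(d_b+d_b)\Lambda_{bb}=0$ with $d_a,d_b$ nonzero forces $\Lambda_{aa}=\Lambda_{bb}=0$, contradicting $W\ne 0$ and $Y\in\Phi(\Lambda)$. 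Consequently $d_i+d_j=0$ implies $d_i=d_j=0$, so $\Lambda_{ij}\ne 0$ forces $d_i=d_j=0$; ordering the basis so that $\ker Y$ comes first, this reads $\Lambda=\begin{pmatrix}\Lambda_0 & 0\\ 0 & 0\end{pmatrix}$, $Y=\begin{pmatrix}0 & 0\\ 0 & \tilde D\end{pmatrix}$ with $\tilde D$ invertible, so $\matRank\Lambda=\matRank\Lambda_0$. Finally, every $W=\begin{pmatrix}W_0 & 0\\ 0 & 0\end{pmatrix}$ satisfies $\jProd{Y}{W}=0$, so $Y\in\Phi(\Lambda)$ gives $\tr(W_0^2\Lambda_0)=\inner{\jProd{W}{W}}{\Lambda}>0$ for every nonzero $W_0\in\S^{\dim\ker Y}$; choosing $W_0=zz^\T$ yields $z^\T\Lambda_0 z>0$ for all $z\ne 0$, i.e. $\Lambda_0\succ 0$. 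Therefore $\Lambda\succeq 0$ and $\matRank\Lambda=\matRank\Lambda_0=\dim\ker Y=m-\matRank Y$.

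The main obstacle, as is typical for these ``cone of squares'' characterizations, is the $(ii)\Rightarrow(i)$ direction: once one works in the eigenbasis of $Y$, the proof hinges on choosing the test matrices in $\Phi(\Lambda)$ well — a rank-two $W$ of the form $e_ae_b^\T+e_be_a^\T$ to rule out an antipodal pair $\{\sigma,-\sigma\}$ of nonzero eigenvalues, and a rank-one $W=zz^\T$ to force positive definiteness of $\Lambda$ on $\ker Y$ — together with the key observation that $\Lambda$ can have no nonzero diagonal entry at an eigenvector of a nonzero eigenvalue of $Y$, which is precisely what makes the rank-two test yield $\inner{\jProd{W}{W}}{\Lambda}=0$. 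Everything else reduces to routine linear algebra.
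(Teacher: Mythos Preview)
Your proof is correct and follows essentially the same approach as the paper: diagonalize $Y$, exploit $(d_i+d_j)\Lambda_{ij}=0$, and use the rank-two test matrix $e_ae_b^\T+e_be_a^\T$ to rule out antipodal eigenvalue pairs and block-restricted test matrices to force positive definiteness of $\Lambda$ on $\ker Y$. The only cosmetic differences are that the paper orders the blocks with the nonsingular part of $Y$ first and establishes the vanishing of the ``$A$-block'' of $\Lambda$ before deducing the eigenvalue property, whereas you prove the eigenvalue property first and use it to conclude directly that $\Lambda$ is supported on $\ker Y$; your ordering is arguably a bit cleaner, but the ideas and test matrices are identical.
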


\begin{proof}
Let us prove first that $(ii)$ implies $(i)$. Since the inner product is invariant 
under orthogonal transformations,  we may assume without loss of generality
that $Y$ is diagonal, i.e.,
\begin{equation*}
Y = \begin{pmatrix} D & 0 \\ 0 & 0\end{pmatrix},
\end{equation*}
where $D$ is a $k \times k$ nonsingular diagonal matrix, and $\matRank Y = k$. 
We partition $\Lambda$ in blocks in a similar way:
\begin{equation*}
\Lambda = \begin{pmatrix}
A &B\\ B^\T&C
\end{pmatrix},
\end{equation*}
where $A \in \S^k, B\in \Re^{k\times (m-k)}$, and $C \in \S^{m-k}$. We will proceed by proving that
$A = 0$, $B = 0$ and $C$ is positive definite. 

First, observe that, by assumption,
\begin{equation}
  \label{eq:lemma_psd.1}
 0 = \jProd{ Y}{\Lambda} = \begin{pmatrix}  \jProd{D}{A} & DB/2 \\ B^\T D/2 & 0 \end{pmatrix} 
\end{equation}
holds. Since $D$ is nonsingular, this implies $B = 0$. Now, let us prove that $A = 0$.
From~\eqref{eq:lemma_psd.1} and the fact that $D$ is diagonal, we obtain  
\begin{equation}\label{eq:lemma}
0 = 2 (\jProd{D}{A})_{ij} = A_{ij}(D_{ii} + D_{jj}).
\end{equation}
Again, because $D$ is nonsingular, it must be the case that all diagonal elements of $A$ should 
be zero. Now, suppose that $A_{ij}$ is nonzero for some $i$ and $j$, with $i \neq j$. In face of \eqref{eq:lemma}, 
this can  only happen if $D_{ii} + D_{jj} = 0$. Let us now consider the following matrix:
\begin{equation*}
  W = \begin{pmatrix} \widetilde W & 0 \\ 0 & 0 \end{pmatrix} \in \S^m, 
\end{equation*}
where $\widetilde W \in \S^k$ is a submatrix containing only two nonzero elements, $\widetilde W_{ij} = 1$ and 
$\widetilde W_{ji} = 1$. Then, easy calculations show that $\jProd{\widetilde W}{D} = 0$, which also implies $\jProd { W}{Y} = 0$. 
Moreover, $\inner{W \circ W}{ \Lambda } = 0$ because $\widetilde W^2 = \jProd{\widetilde W}{\widetilde W}$ is the diagonal matrix 
having $1$ in the $(i,i)$ entry and $1$ in the $(j,j)$ entry, and since $A_{ii}$ and $A_{jj}$ are zero. 
We conclude that $Y \notin \Phi(\Lambda)$, contradicting the assumptions. So, it follows that $A$ must be zero. 
Similarly, we have that $D_{ii} + D_{jj}$ is never zero, which corresponds 
to the statement about eigenvalues $\sigma$ and $\sigma '$ in the lemma. In fact,
if $D_{ii} + D_{jj}$ is zero, then, by taking $W$ exactly as before, we have
$\jProd {W}{Y} = 0$ and  $\inner{W \circ W}{ \Lambda } = 0$. Once again, this shows that $Y \notin \Phi(\Lambda)$,
which is a contradiction.
 
It remains to show that $C$ is positive definite. Taking an arbitrary nonzero $\tilde{H} \in \S^{m-k}$, and defining
\begin{equation*}
  H = \begin{pmatrix} 0 &0\\ 0&\tilde{H} \end{pmatrix} \in \S^m, 
\end{equation*}
we easily obtain $H \circ Y = 0$. Since $Y \in \Phi(\Lambda)$, we have $\inner{H^2}{\Lambda} > 0$.
But this shows that $\inner{\tilde{H}^2}{C} > 0$, which implies that $C$ is positive definite. In particular, 
the rank of $\Lambda$ is equal to the rank of $C$, which is $m - \mathrm{rank } \,Y$.

Now, let us prove that $(i)$ implies $(ii)$. Similarly, we may
assume $\Lambda = \bigl(\begin{smallmatrix} 0 &0\\ 0&C \end{smallmatrix} \bigr)$, with 
$C$ positive definite. Then, we can take
$Y = \bigl(\begin{smallmatrix} E &0\\ 0&0\end{smallmatrix} \bigr)$, 
where $E$ is any positive definite matrix. It follows that any matrix 
$W \in \S^m$ satisfying  $\jProd{Y}{W}  = 0$ 
must have the shape $\bigl(\begin{smallmatrix} 0 &0\\ 0&F \end{smallmatrix} \bigr)$, for 
some matrix $F$.
Since $C$ is positive definite, it is clear that $\inner{\jProd{W}{W}}{ \Lambda } > 0 $, whenever
$W$ is nonzero. 
\end{proof}

The statement about the sum of nonzero eigenvalues might seem innocuous at 
first, but it will be very useful in Section \ref{sec:sosc}. In fact, the idea for this new 
characterization of positive semidefiniteness comes from the second-order conditions of~\eqref{eq:sdps}. 
For now, let us present another result that will be necessary. Given $A \in \S^m$, denote by 
$L_A \colon \S^m \to \S^m$ the linear operator defined by
\begin{equation*}
L_A(E) := \jProd{A}{E}
\end{equation*}
for all $E \in \S^m$. There are many examples of invertible matrices 
$A$ for which the operator $L_A$ is not invertible\footnote{Take $A = \bigl(\begin{smallmatrix} 1 &0\\ 0&-1 \end{smallmatrix} \bigr)$, for example.}. 
This is essentially due to the failure of the condition 
on the eigenvalues. The following proposition is well-known in the context 
of Euclidean Jordan algebra (see \cite[Proposition~1]{Sturm2000}), 
but we include here a short-proof for completeness.

\begin{proposition}\label{prop:nonsigular}
Let $A \in \S^m$. Then, $L_A$ is invertible if and only if $\sigma + \sigma ' \neq 0$ for 
every pair of eigenvalues $\sigma, \sigma '$ of $A$; in this case, $A$ must be invertible.
\end{proposition}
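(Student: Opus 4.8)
The plan is to diagonalize $A$ and reduce the invertibility of $L_A$ to a statement about the eigenvalues of $A$ acting on the entries of a symmetric matrix. Since $L_A$ commutes with conjugation by orthogonal matrices in the sense that $Q^\T L_A(E) Q = L_{Q^\T A Q}(Q^\T E Q)$, and conjugation by $Q$ is a linear isomorphism of $\S^m$, the operator $L_A$ is invertible if and only if $L_{Q^\T A Q}$ is. Hence I may assume $A = \diag(\lambda_1,\dots,\lambda_m)$ with $\lambda_i$ the eigenvalues of $A$.

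With $A$ diagonal, a direct computation gives $(L_A(E))_{ij} = \tfrac12(\lambda_i+\lambda_j)E_{ij}$, so $L_A$ acts diagonally on the standard basis $\{E_{ij}+E_{ji} : i\le j\}$ of $\S^m$, with eigenvalue $\tfrac12(\lambda_i+\lambda_j)$ on the basis element indexed by $(i,j)$. Therefore $L_A$ is invertible if and only if $\lambda_i+\lambda_j \ne 0$ for all $i,j$ (including $i=j$, which forces each $\lambda_i \ne 0$), i.e., if and only if $\sigma+\sigma' \ne 0$ for every pair of eigenvalues $\sigma,\sigma'$ of $A$. Taking $i=j$ in this condition shows in particular that no eigenvalue of $A$ is zero, so $A$ is invertible; this settles the final clause.

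There is essentially no hard obstacle here — the argument is the standard spectral decomposition of the symmetrized left-multiplication (quadratic representation) operator. The only point requiring a little care is the reduction step: one should note that orthogonal conjugation genuinely intertwines $L_A$ with $L_{Q^\T A Q}$ and is a bijection on $\S^m$, so invertibility transfers. Everything else is the elementary observation that $L_A$ is diagonalized by the basis $\{E_{ij}+E_{ji}\}$ once $A$ itself is diagonal, together with reading off when all the resulting eigenvalues $\tfrac12(\lambda_i+\lambda_j)$ are nonzero.
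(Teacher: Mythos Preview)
Your proof is correct and takes essentially the same approach as the paper: both reduce to the case where $A$ is diagonal via orthogonal invariance, then use the entrywise formula $2(L_A(W))_{ij} = (\lambda_i+\lambda_j)W_{ij}$ to characterize invertibility in terms of the sums $\lambda_i+\lambda_j$. Your phrasing in terms of the eigenstructure of $L_A$ on the basis $\{E_{ij}+E_{ji}\}$ is a slightly more explicit packaging of the same computation the paper carries out by examining the kernel directly.
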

\begin{proof}
The statements in the proposition are all invariant under orthogonal transformations.
Thus,  we may assume without loss of generality that $A$ is already diagonalized, 
and so $A_{kk}$ is an eigenvalue of $A$ for every $k = 1,\ldots, m$. 

Let us show that the invertibility of $L_A$ implies the statement about the eigenvalues of $A$. We will 
do so by proving the contrapositive.
Take $i$ and $j$ such that $A_{ii} + A_{jj} = 0$. 
Let $W$ be such that all the entries are zero except for $W_{ij} = W_{ji} = 1$. 
Then, we have $\jProd{A}{W} = 0$. This shows that the kernel of $L_A$ is non-trivial and consequently, 
$L_A$ is not invertible.

Reciprocally, since we assume that $A$ is diagonal, for every $W \in \S^m$, 
we have $2(L_A(W))_{ij} = W_{ij}(A_{ii} + A_{jj})$ for all $i$ and $j$. Due to the fact 
that $A_{ii} + A_{jj}$ is never zero, the kernel of $L_A$ must only contain 
the zero matrix. Hence $L_A$ is invertible, and the result follows.
\end{proof} 
In view of Proposition \ref{prop:nonsigular}, the matrix $D$ which appears 
in the proof of Lemma \ref{lemma:psd} is such that $L_D$ is invertible.  
This will play an important role when we discuss the relation between the
second-order sufficient conditions of problems \eqref{eq:sdp} and \eqref{eq:sdps}.

\subsection{KKT conditions and constraint qualifications}

Now, let us consider the following lemma, which will allow us to
present appropriately the KKT conditions of problems~\eqref{eq:sdp}
and \eqref{eq:sdps}.

\begin{lemma}
  \label{lem:matrices}
  The following statements hold.
  \begin{itemize}
  \item[(a)] For any matrices $A,B \in \Re^{m \times m}$, let $\varphi
    \colon \Re^{m \times m} \to \Re$ be defined by $\varphi(Z) \doteq
    \tr(Z^\T A Z B)$. Then, we have $\grad \varphi(Z) = AZB + A^\T Z
    B^\T$.
  \item[(b)] For any matrix $A \in \S^m$, let $\varphi \colon \S^m
    \to \Re$ be defined by $\varphi(Z) \doteq \inner{Z \circ
      Z}{A}$. Then, we have $\grad \varphi(Z) = 2\jProd {Z}{A} $.
  \item[(c)] For any matrix $A \in \Re^{m \times m}$ and function
    $\theta \colon \Re^n \to \S^m$, let $\psi \colon \Re^n \to \Re$
    be defined by $\psi(x) = \inner{\theta(x)}{A}$. Then, we have
    $\grad \psi(x) = \grad \theta(x)^* A$.
  \item[(d)] Let $A,B \in \S^m$. Then, they commute, i.e., $AB = BA$,
    if and only if $A$ and $B$ are simultaneously diagonalizable by an
    orthogonal matrix, i.e., there exists an orthogonal matrix $Q$
    such that $QAQ^\T$ and $QBQ^\T$ are diagonal.
  \item[(e)] Let $A, B \in \S^m_+$. Then, $AB = 0$ if and only if
    $\inner{A}{B} = 0$.
  \end{itemize}
\end{lemma}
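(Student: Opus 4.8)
The plan is to verify the five identities by direct computation: parts (a)--(c) via first-order expansions and the chain rule, and parts (d)--(e) by classical linear algebra.

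For (a), I would expand $\varphi(Z+tH) = \tr\bigl((Z+tH)^\T A (Z+tH) B\bigr)$ in powers of $t$ and read off the coefficient of $t$, namely $\tr(H^\T A Z B) + \tr(Z^\T A H B)$. The first term is $\inner{H}{AZB}$ by definition, and, using the cyclic invariance of the trace and its invariance under transposition, the second term equals $\tr(B Z^\T A H) = \inner{H}{A^\T Z B^\T}$; hence $\grad\varphi(Z) = AZB + A^\T Z B^\T$. For (b), since $Z$ is symmetric we have $\inner{Z\circ Z}{A} = \tr(Z^2 A)$, so the derivative of $\varphi$ at $Z$ along $H \in \S^m$ is $\tr\bigl((ZH+HZ)A\bigr) = \tr\bigl(H(AZ+ZA)\bigr)$; the matrix $AZ+ZA$ is symmetric because $A$ and $Z$ are, so this equals $\inner{H}{AZ+ZA}$ and $\grad\varphi(Z) = AZ+ZA = 2\,\jProd{Z}{A}$. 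Equivalently, (b) follows from (a) with the two fixed matrices taken to be $I$ and $A$, provided one remembers that the gradient of the restriction of a smooth function to the subspace $\S^m$ is the orthogonal projection onto $\S^m$ of its gradient on $\Re^{m\times m}$; that projection is exactly what symmetrises $2ZA$ into $ZA+AZ$.

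Parts (c)--(e) are routine. For (c), the chain rule gives $\partial\psi(x)/\partial x_i = \inner{\partial\theta(x)/\partial x_i}{A}$ for each $i$, and by the definition of the adjoint of $\grad\theta(x)$ (whose $i$th component matrix is $\partial\theta(x)/\partial x_i$) this is precisely the $i$th entry of $\grad\theta(x)^* A$; stacking the entries yields $\grad\psi(x) = \grad\theta(x)^* A$. For (d), the ``if'' direction is immediate since diagonal matrices commute and conjugation by an orthogonal matrix preserves products; for ``only if'', I would split $\Re^m$ into the eigenspaces of $A$, note that each is $B$-invariant because $A(Bv) = B(Av) = \lambda(Bv)$ whenever $Av = \lambda v$, diagonalise the symmetric restriction of $B$ to each eigenspace in an orthonormal basis, and assemble these bases into a common orthonormal eigenbasis of $A$ and $B$. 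For (e), one direction is trivial ($AB = 0$ gives $\inner{A}{B} = \tr(AB) = 0$); for the converse, write $A = \sqrt{A}\sqrt{A}$ and $B = \sqrt{B}\sqrt{B}$ and use cyclicity to get $\inner{A}{B} = \tr\bigl(\sqrt{A}\,B\,\sqrt{A}\bigr) = \tr\bigl((\sqrt{A}\sqrt{B})(\sqrt{A}\sqrt{B})^\T\bigr) = \norm{\sqrt{A}\sqrt{B}}^2$, so $\inner{A}{B} = 0$ forces $\sqrt{A}\sqrt{B} = 0$, and then $AB = \sqrt{A}(\sqrt{A}\sqrt{B})\sqrt{B} = 0$.

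I do not anticipate any genuine obstacle: each item is a short, self-contained computation. The only places that call for a little care are the passage from the ambient space $\Re^{m\times m}$ to the subspace $\S^m$ in (b) (resolved by symmetrisation), reading off the adjoint in (c) with the correct indexing, and the use of positive semidefinite square roots in (e) to produce a squared-norm expression.
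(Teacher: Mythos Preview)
Your proposal is correct and matches the paper's proof in spirit. For parts (a), (d), (e) the paper simply cites Bernstein's matrix book, so your self-contained arguments are perfectly acceptable substitutes; your computation for (c) is essentially identical to the paper's.

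The only place with a visible difference is (b). The paper derives it from (a) by first rewriting $\inner{Z\circ Z}{A}$ on all of $\Re^{m\times m}$ as the symmetric extension $\tfrac{1}{2}\tr(Z^\T AZ)+\tfrac{1}{2}\tr(Z^\T ZA)$; applying (a) to each summand yields an $\Re^{m\times m}$-gradient that is already symmetric, so the projection onto $\S^m$ is automatic. You instead compute the directional derivative directly on $\S^m$ and observe that $AZ+ZA$ is symmetric, which is arguably cleaner; you also mention the route via (a) with an explicit symmetrisation step. Both arguments are equivalent and equally short, so there is nothing to change.
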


\begin{proof}
  (a) See \cite[Section 10.7]{Ber09}.\\ 

  \noindent (b) Note that $\varphi(Z) =  \inner{\jProd{Z}{Z}}{\Lambda} = 
  \frac{1}{2}\inner{ZZ^\T}{A} + \frac{1}{2}\inner{Z^\T Z}{A}  = \frac{1}{2}\tr(ZZ^\T A) +  \frac{1}{2}\tr(Z^\T ZA) = 
  \frac{1}{2}\tr(Z^\T AZ) +  \frac{1}{2}\tr(Z^\T ZA) $.  Let $\varphi _1(Z) = \tr(Z^\T AZ)$ and 
  $\varphi _2(Z) = \tr(Z^\T Z A)$. Then, from item (a), we have $\grad \varphi _1(Z) = AZ + A^\T Z$
  and $\grad \varphi _2(Z) = ZA + ZA^\T$. Taking into account the symmetry of $A$, 
  we have $\grad \varphi _1(Z) = 2AZ$ and $\grad \varphi _2(Z) = 2ZA$. Hence we have 
  $\grad \varphi (Z) = \frac{1}{2} \grad \varphi _1(Z) + \frac{1}{2} \grad \varphi _2(Z) = 2\jProd{A}{Z}$. \\

  \noindent (c) Observe that $\psi(x) = \inner{\theta(x)}{A} =
  \tr(\theta(x)A) = \sum_{i,j} \theta(x)_{ij} A_{ij}$ for any $x \in
  \Re^n$. Then, we have
  \begin{displaymath}
    \grad \psi(x) = 
    \left(
    \begin{array}{c}
      \sum_{i,j} (\partial \theta(x)_{ij}/\partial x_1) A_{ij} \\
      \vdots \\
      \sum_{i,j} (\partial \theta(x)_{ij}/\partial x_n) A_{ij}
    \end{array}
    \right)
    = 
    \left(
    \begin{array}{c}
      \inner{\partial \theta(x)/\partial x_1}{A}\\
      \vdots \\
      \inner{\partial \theta(x)/\partial x_n}{A}\\
    \end{array}
    \right)
    = \grad \theta(x)^* A,
  \end{displaymath}
  where the last equality follows from the definition of adjoint
  operator.\\

  \noindent (d) See  \cite[Section 8.17]{Ber09}.\\

  \noindent (e) See  \cite[Section 8.12]{Ber09}.
\end{proof}

We can now recall the KKT conditions of problems~\eqref{eq:sdp}
and~\eqref{eq:sdps}. First, define the Lagrangian function $L \colon
\Re^n \times \S^m \to \Re$ associated with problem~\eqref{eq:sdp} as
\begin{displaymath}
  L(x,\Lambda) \doteq f(x) - \inner{G(x)}{\Lambda}.
\end{displaymath}
We say that $(x,\Lambda) \in \Re^n \times \S^m$ is a KKT pair of
problem~\eqref{eq:sdp} if the following conditions are satisfied:
\begin{align}
  \grad f(x) - \grad G(x)^* \Lambda =  0, \label{eq:kkt_sdp.1} \tag{P1.1}\\ 
  \Lambda \succeq 0, \label{eq:kkt_sdp.2} \tag{P1.2}\\ 
  G(x) \succeq 0, \label{eq:kkt_sdp.3} \tag{P1.3}\\ 
  \jProd{\Lambda}{G(x)}  = 0, \label{eq:kkt_sdp.4} \tag{P1.4}
\end{align}
where, from Lemma~\ref{lem:matrices}(c), we have 
$\grad f(x) - \grad G(x)^* \Lambda = \grad_x L(x,\Lambda)$. Applying the trace map on both
sides of \eqref{eq:kkt_sdp.4}, we see that
condition~\eqref{eq:kkt_sdp.4} is equivalent to $\inner{\Lambda}{G(x)}
= 0$. This result, together with the fact that $\Lambda \succeq 0$ and
$G(x) \succeq 0$, shows that \eqref{eq:kkt_sdp.4} is also equivalent
to $\Lambda G(x) = 0$, by Lemma~\ref{lem:matrices}(e). Moreover, the
equality \eqref{eq:kkt_sdp.4} implies that $\Lambda$ and $G(x)$
commute, which means, by Lemma~\ref{lem:matrices}(d), that they are
simultaneously diagonalizable by an orthogonal matrix. The following
definition is also well-known.

\begin{definition}\label{def:strict}
  If $(x,\Lambda) \in \Re^n \times \S^m$ is a KKT pair of
  \eqref{eq:sdp} such that
  \begin{displaymath}
    \matRank G(x) + \matRank \Lambda = m,
  \end{displaymath}
  then $(x,\Lambda)$ is said to satisfy the \emph{strict
    complementarity} condition.
\end{definition}

As for the equality constrained NLP problem~\eqref{eq:sdps}, we observe
that $(x,Y,\Lambda) \in \Re^n \times \S^m \times \S^m$ is a KKT triple
if the conditions below are satisfied:
\begin{align*}
  \grad_{(x,Y)} \mathcal{L}(x,Y,\Lambda) & = 0, \\
  G(x) - Y \circ Y & = 0,
\end{align*}
where $\mathcal{L} \colon \Re^n \times \S^m \times \S^m \to \Re$ is
the Lagrangian function associated with~\eqref{eq:sdps}, which is given by
\begin{displaymath}
  \mathcal{L}(x,Y,\Lambda) \doteq f(x) - \inner{G(x) - Y \circ
    Y}{\Lambda}.
\end{displaymath}
From Lemma~\ref{lem:matrices}(b),(c), these conditions can be written as
follows:
\begin{align}
  \grad f(x) - \grad G(x)^* \Lambda = 0, \label{eq:kkt_sdps.1} 
  \tag{P2.1}\\
  \jProd{\Lambda}{Y} = 0, \label{eq:kkt_sdps.2} \tag{P2.2}\\
  G(x) - Y \circ Y = 0. \label{eq:kkt_sdps.3} \tag{P2.3}
\end{align}

For problem \eqref{eq:sdp}, we say that the \emph{Mangasarian-Fromovitz constraint 
qualification} (MFCQ) holds at a point $x$ if there exists some $d \in \Re^n$ such that 
\begin{equation*}
G(x) + \grad G(x)d \in \interior \PSDcone{m},
\end{equation*}
where $\interior \PSDcone{m} $ denotes the interior of $\PSDcone{m} $, that 
is, the set of symmetric positive definite matrices.
If $x$ is a local minimum for \eqref{eq:sdp}, MFCQ ensures the existence of 
a Lagrange multiplier $\Lambda$ and that the set of multipliers is bounded.
A more restrictive assumption is the nondegeneracy condition discussed 
in \cite{shapiro97}, where it is presented in terms of a transversality condition on 
the map $G$. However,
at the end, it boils down to the following condition.
\begin{definition}\label{def:nondeg}
  Suppose that $(x,\Lambda) \in \Re^n \times \S^m$ is a KKT pair of
  \eqref{eq:sdp} such that
\begin{displaymath}
 \S^m = \lineality \tanCone{G(x)}{\PSDcone{m}} + \mathrm{Im}\,\grad G(x),
\end{displaymath}
where $\mathrm{Im}\,\grad G(x)$ denotes the image of the linear map $\grad G(x)$, $\tanCone{G(x)}{\PSDcone{m}}$ denotes
  the tangent cone of $\S^m_+$ at $G(x)$, and  $\lineality \tanCone{G(x)}{\PSDcone{m}}$ is the lineality space of 
the tangent cone $\tanCone{G(x)}{\PSDcone{m}}$, i.e., $\lineality \tanCone{G(x)}{\PSDcone{m}} = \tanCone{G(x)}{\PSDcone{m}} \cap -\tanCone{G(x)}{\PSDcone{m}}$ 
(see, for instance, the observations on page 310 in \cite{shapiro97}).
  Then, $(x,\Lambda)$ is said to satisfy the \emph{nondegeneracy} condition.
\end{definition}
A good thing about the nondegeneracy condition is that it ensures 
that $\Lambda$ is unique.

For problem \eqref{eq:sdps}, a common constraint qualification is 
the \emph{linear independence constraint qualification} (LICQ), which simply 
requires that the gradients of the constraints be linearly independent. In 
Section \ref{sec:cq}, we will show that LICQ and the nondegeneracy are 
essentially equivalent.

\subsection{Second-order conditions}

Since \eqref{eq:sdps} is just an ordinary equality constrained nonlinear program, 
second-order sufficient conditions are well-known and can be written as 
follows. 

\begin{proposition}
  Let $(x,Y,\Lambda) \in \Re^n \times \S^m \times \S^m$ be a KKT triple of problem~\eqref{eq:sdps}. The
  \emph{second-order sufficient condition} (SOSC-NLP)\footnote{We refer to this condition as SOSC-NLP in
    order to distinguish it from SOSC for~SDP.} holds if
  \begin{equation}
    \inner{\grad _x ^2 L(x,\Lambda)v}{v} + 2\inner{\jProd{W}{W}}{\Lambda} > 0 \label{eq:sosc_nlp.1}
  \end{equation}
  for every nonzero $(v,W) \in \Re^n \times \S^m$ such that $\grad G(x)v - 2\jProd{Y}{W} = 0$.
\end{proposition}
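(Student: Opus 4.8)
The plan is to start from the textbook second-order sufficient condition for an equality-constrained nonlinear program and specialize it to \eqref{eq:sdps}. Recall that for $\min_z f(z)$ subject to $h(z)=0$ with a KKT pair $(z,\mu)$, the classical SOSC reads $\inner{\grad^2_{zz}\mathcal{L}(z,\mu)d}{d}>0$ for every nonzero $d$ in the tangent space $\ker\grad h(z)$ of the feasible manifold; since the constraints are equalities, the critical cone coincides with this subspace (the linear term drops out by the KKT relation $\grad f = \grad h^{*}\mu$), so no cone restriction is needed. Here $z=(x,Y)$, $h(z)=G(x)-\jProd{Y}{Y}$, and $\mathcal{L}(x,Y,\Lambda)=f(x)-\inner{G(x)-\jProd{Y}{Y}}{\Lambda}$, so the proof reduces to two routine computations: the linearization of the constraint map and the Hessian of $\mathcal{L}$ with respect to $(x,Y)$.

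For the first computation, the derivative of $(x,Y)\mapsto G(x)-\jProd{Y}{Y}$ at $(x,Y)$ in a direction $(v,W)$ is $\grad G(x)v-(YW+WY)=\grad G(x)v-2\jProd{Y}{W}$, using that the derivative of $Y\mapsto Y^2$ is $W\mapsto YW+WY$. Hence the tangent subspace is exactly $\{(v,W)\in\Re^n\times\S^m\mid\grad G(x)v-2\jProd{Y}{W}=0\}$, which is precisely the set of directions appearing in \eqref{eq:sosc_nlp.1}.

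For the second computation, I would rewrite $\mathcal{L}(x,Y,\Lambda)=\bigl(f(x)-\inner{G(x)}{\Lambda}\bigr)+\inner{\jProd{Y}{Y}}{\Lambda}=L(x,\Lambda)+\inner{\jProd{Y}{Y}}{\Lambda}$, which makes the separation of variables transparent: the $x$-block of $\grad^2_{(x,Y)}\mathcal{L}$ is $\grad^2_x L(x,\Lambda)$, the mixed $x$--$Y$ block vanishes (because $L$ does not involve $Y$ and $\inner{\jProd{Y}{Y}}{\Lambda}$ does not involve $x$), and the $Y$-block is the Hessian of $Y\mapsto\inner{\jProd{Y}{Y}}{\Lambda}$. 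That map is a quadratic form in $Y$ with associated symmetric bilinear form $(W,Z)\mapsto\inner{\jProd{W}{Z}}{\Lambda}$ (equivalently, differentiate the gradient $2\jProd{Y}{\Lambda}$ from Lemma~\ref{lem:matrices}(b) once more), so its Hessian applied to $(W,W)$ equals $2\inner{\jProd{W}{W}}{\Lambda}$. Assembling the blocks gives $\inner{\grad^2_{(x,Y)}\mathcal{L}(x,Y,\Lambda)(v,W)}{(v,W)}=\inner{\grad^2_x L(x,\Lambda)v}{v}+2\inner{\jProd{W}{W}}{\Lambda}$, and substituting this together with the tangent subspace into the classical SOSC yields exactly \eqref{eq:sosc_nlp.1}.

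There is no genuine obstacle here: everything is a direct calculation, and the only thing requiring care is bookkeeping — confirming that the cross Hessian term is truly zero, and tracking the factor of $2$ consistently in both the linearized constraint and the $Y$-block Hessian in accordance with the normalization $\jProd{W}{Z}=(WZ+ZW)/2$.
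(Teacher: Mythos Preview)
Your proposal is correct and follows essentially the same approach as the paper: both invoke the textbook SOSC for equality-constrained NLP and then compute that $\inner{\grad^2_{(x,Y)}\mathcal{L}(x,Y,\Lambda)(v,W)}{(v,W)}=\inner{\grad^2_x L(x,\Lambda)v}{v}+2\inner{\jProd{W}{W}}{\Lambda}$ on the tangent subspace $\{\grad G(x)v-2\jProd{Y}{W}=0\}$. Your write-up is simply more explicit about the block structure (vanishing cross term, factor of~$2$) than the paper's terse version.
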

\begin{proof}
The second-order sufficient condition for~\eqref{eq:sdps} holds if 
  \begin{displaymath}
    \inner{\grad _{(x,Y)} ^2 \mathcal{L}(x,Y,\Lambda)(v,W)}{(v,W)} > 0
  \end{displaymath}
for every nonzero $(v,W) \in \Re^n \times \S^m$ such that $\grad G(x)v - 2\jProd{Y}{W} = 0$; see 
\cite[Section 3.3]{Ber99} or \cite[Theorem 12.6]{NW99}.
Since
\begin{equation*}
\inner{\grad _{(x,Y)} ^2 \mathcal{L}(x,Y,\Lambda)(v,W)}{(v,W)} = \inner{\grad _x ^2 L(x,\Lambda)v}{v} + 2\inner{\jProd{W}{W}}{\Lambda},
\end{equation*}
we have the desired result.
\end{proof}
Similarly, we have the following second-order  necessary condition. Note that we require the 
LICQ to hold.

\begin{proposition}
Let $(x,Y)$ be a local minimum for \eqref{eq:sdps} 
and $(x,Y,\Lambda) \in \Re^n \times \S^m \times \S^m$ be a KKT triple such 
that LICQ holds. Then, the following second-order necessary condition (SONC-NLP) holds: 
\begin{equation}
    \inner{\grad _x ^2 L(x,\Lambda)v}{v} + 2\inner{\jProd{W}{W}}{\Lambda} \geq 0 \label{eq:sonc_nlp}
  \end{equation}
  for every $(v,W) \in \Re^n \times \S^m$ such that $\grad G(x)v - 2\jProd{Y}{W} = 0$.
\end{proposition}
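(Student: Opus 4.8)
The plan is to reduce the second-order necessary condition for \eqref{eq:sdps} to the standard textbook statement for equality-constrained NLP, exactly as was done for the sufficient condition in the preceding proposition. First I would recall that, since \eqref{eq:sdps} has only the equality constraint $G(x) - Y\circ Y = 0$ and LICQ holds at $(x,Y)$, the classical second-order necessary condition (see \cite[Section~3.3]{Ber99} or \cite[Theorem~12.6]{NW99}) applies: for a local minimum $(x,Y)$ with associated (unique) multiplier $\Lambda$, the Hessian of the Lagrangian $\mathcal{L}$ with respect to $(x,Y)$ is positive \emph{semidefinite} on the tangent space of the constraint, i.e.
\begin{equation*}
  \inner{\grad_{(x,Y)}^2 \mathcal{L}(x,Y,\Lambda)(v,W)}{(v,W)} \geq 0
\end{equation*}
for every $(v,W)\in\Re^n\times\S^m$ in the kernel of the constraint Jacobian. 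The second step is to identify that kernel: differentiating $G(x)-Y\circ Y$ at $(x,Y)$ and using Lemma~\ref{lem:matrices}(b) for the $Y\circ Y$ term gives the linearized constraint $\grad G(x)v - 2\jProd{Y}{W} = 0$, which is precisely the set over which \eqref{eq:sonc_nlp} is asserted.

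The third step is the computation of the Lagrangian Hessian. Here I would reuse the identity already recorded in the proof of the sufficient-condition proposition,
\begin{equation*}
  \inner{\grad_{(x,Y)}^2 \mathcal{L}(x,Y,\Lambda)(v,W)}{(v,W)}
  = \inner{\grad_x^2 L(x,\Lambda)v}{v} + 2\inner{\jProd{W}{W}}{\Lambda},
\end{equation*}
which follows because $\mathcal{L}(x,Y,\Lambda) = f(x) - \inner{G(x)}{\Lambda} + \inner{Y\circ Y}{\Lambda} = L(x,\Lambda) + \inner{Y\circ Y}{\Lambda}$: the cross second derivatives in $x$ and $Y$ vanish since the two groups of variables are separated, the $xx$-block is $\grad_x^2 L(x,\Lambda)$, and the $YY$-block applied to $(W,W)$ is $2\inner{\jProd{W}{W}}{\Lambda}$ again by Lemma~\ref{lem:matrices}(b) (the map $Y\mapsto\inner{Y\circ Y}{\Lambda}$ is a quadratic form whose Hessian evaluated at $(W,W)$ equals twice the form $W\mapsto\inner{W\circ W}{\Lambda}$). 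Substituting this into the textbook inequality yields \eqref{eq:sonc_nlp}.

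There is essentially no hard part here — the statement is a direct specialization of a classical theorem — so the only thing to be careful about is bookkeeping: making sure the factor of $2$ from differentiating $Y\circ Y$ is tracked consistently both in the linearized constraint and in the Hessian, and noting that LICQ is exactly the constraint qualification under which the classical second-order necessary condition is stated (it guarantees existence and uniqueness of $\Lambda$ and that the critical cone equals the full kernel of the Jacobian, so no additional multiplier-set considerations are needed). I would therefore present the proof in three short lines: cite the classical SONC for equality-constrained NLP, identify the tangent space via Lemma~\ref{lem:matrices}(b), and substitute the Hessian identity already established for the SOSC proposition.
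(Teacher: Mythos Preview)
Your proposal is correct and matches the paper's approach: the paper's proof is simply ``See \cite[Theorem 12.5]{NW99}'', so both reduce the result to the classical second-order necessary condition for equality-constrained NLP, with your version spelling out the Hessian identity and tangent-space identification that the paper leaves implicit. One minor bookkeeping slip: you cite \cite[Theorem~12.6]{NW99}, which is the sufficient condition; the necessary condition is Theorem~12.5.
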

\begin{proof}
  See \cite[Theorem 12.5]{NW99}.
\end{proof}

Second-order conditions for \eqref{eq:sdp} are a more delicate matter. Let $(x,\Lambda )$ be a KKT 
pair of \eqref{eq:sdp}. It is true that a \emph{sufficient} condition for optimality 
is that the Hessian of the Lagrangian be positive definite over the 
set of critical directions. However, replacing ``positive definite'' by  
``positive semidefinite'' does not yield a necessary condition. Therefore, 
it seems that there is a gap between necessary and sufficient conditions. In order 
to close the gap, it is essential to add an additional term to the Hessian of the Lagrangian. For the 
theory behind this see, for instance, the papers by Kawasaki~\cite{Kawasaki88}, Cominetti~\cite{Cominneti1990}, and 
Bonnans, Cominetti and Shapiro~\cite{BonnansCS}.
The condition below was obtained by Shapiro in~\cite{shapiro97} and it is sufficient for 
$(x,\Lambda)$ to be a local minimum, see Theorem 9 therein. 




\begin{proposition}
  Let $(x,\Lambda) \in \Re^n \times \S^m$ be a KKT pair of problem~\eqref{eq:sdp} 
  satisfying strict complementarity and the nondegeneracy condition. The
  second-order sufficient condition (SOSC-SDP) holds if
  \begin{equation}\label{eq:sosc_sdp.1}
    \inner{(\grad_x^2L(x,\Lambda) + H(x,\Lambda))d}{d} > 0
  \end{equation}
  for all nonzero $d \in \mathcal{C}(x)$, where 
  \begin{displaymath}
    \mathcal{C}(x) \doteq \left\{ d \in \Re^n \, \middle| \, \grad G(x)d \in
    \tanCone{G(x)}{\PSDcone{m}}, \inner{\grad f(x)}{ d }= 0
    \right\}
  \end{displaymath}
  is the critical cone at $x$,  and $H(x,\Lambda)\in \S^m$ is a matrix 
  with elements
  \begin{equation}
    H(x,\Lambda)_{ij} \doteq 2 \, \tr \left( \frac{\partial
      G(x)}{\partial x_i} G(x)^{\dagger} \frac{\partial G(x)}{\partial
      x_j} \Lambda \right)\label{eq:sosc_sdp.2}
  \end{equation}
  for $i,j = 1, \ldots , n$. In this case, $(x,\Lambda)$ is a local minimum for 
  \eqref{eq:sdp}. Conversely, if $x$ is a local minimum for \eqref{eq:sdp} 
  and $(x,\Lambda)$ is a KKT pair satisfying strict complementarity and
  nondegeneracy, then the following 
  second-order necessary condition (SONC-SDP) holds:
  \begin{equation}\label{eq:sonc_sdp}
    \inner{(\grad_x^2L(x,\Lambda) + H(x,\Lambda))d}{d} \geq 0
  \end{equation}
  for all $d \in \mathcal{C}(x)$.  
\end{proposition}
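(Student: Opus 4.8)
The plan is to read this proposition as the specialization, to $K=\PSDcone{m}$, of the abstract ``no-gap'' second-order theory for cone-constrained optimization (Kawasaki, Cominetti, and Bonnans--Cominetti--Shapiro), and then to identify the abstract curvature (``sigma'') term with the explicit matrix $H(x,\Lambda)$. For $\min f(x)$ subject to $G(x)\in\PSDcone{m}$ at a KKT pair $(x,\Lambda)$ with $\Lambda$ \emph{unique}, the abstract sufficient condition is
\[
  \inner{\grad_x^2 L(x,\Lambda)d}{d}\;-\;\sigma\big(-\Lambda,\, T^2_{\PSDcone{m}}(G(x),\grad G(x)d)\big)\;>\;0
  \qquad\text{for all nonzero } d\in\mathcal{C}(x),
\]
where $T^2_{\PSDcone{m}}(\cdot,\cdot)$ denotes the (outer) second-order tangent set and $\sigma(\Gamma,\cdot)$ the support function of a set at $\Gamma$; the matching necessary condition has ``$\geq$''. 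Two preliminary checks make this applicable: first, the nondegeneracy condition of Definition~\ref{def:nondeg} implies Robinson's constraint qualification and the uniqueness of $\Lambda$ (as already noted in the text); second, $\PSDcone{m}$ is $C^2$-cone reducible, hence second-order regular, which is exactly what yields the no-gap property, i.e., what makes the necessary version with ``$\geq$'' valid and not merely the sufficient version with ``$>$''.

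The substantive step is to evaluate the support-function term. Because every quantity involved is invariant under orthogonal conjugation, I would diagonalize $G(x)$ and, with $A=G(x)$, write $A=\bigl(\begin{smallmatrix}\bar A&0\\0&0\end{smallmatrix}\bigr)$ with $\bar A$ positive definite of size $\matRank A$, so that $A^{\dagger}=\bigl(\begin{smallmatrix}\bar A^{-1}&0\\0&0\end{smallmatrix}\bigr)$; in these coordinates $\Lambda=\bigl(\begin{smallmatrix}0&0\\0&\bar\Lambda\end{smallmatrix}\bigr)$ by \eqref{eq:kkt_sdp.4} (equivalently $\Lambda G(x)=0$), and $\bar\Lambda$ is positive \emph{definite} on $\ker A$ by strict complementarity. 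For a critical direction $d$, with $h=\grad G(x)d$, the identity $\inner{\Lambda}{h}=\inner{\grad f(x)}{d}=0$ (using \eqref{eq:kkt_sdp.1} and $d\in\mathcal{C}(x)$) together with $h\in\tanCone{A}{\PSDcone{m}}$ (so the $\ker A$ block of $h$ is positive semidefinite) forces that block of $h$ to vanish. A short parabolic-feasibility computation (take a curve $A+th+\tfrac{t^2}{2}w+o(t^2)\succeq0$ and form the Schur complement with respect to the invertible block $\bar A$) then gives $T^2_{\PSDcone{m}}(A,h)=\{w\in\S^m : \text{the }\ker A\text{ block of } w-2hA^{\dagger}h\text{ is positive semidefinite}\}$, and hence $\sigma(-\Lambda,T^2_{\PSDcone{m}}(A,h))=-2\inner{\Lambda}{hA^{\dagger}h}=-2\tr(hA^{\dagger}h\,\Lambda)$, the supremum being attained at the vertex of this shifted cone precisely because $-\bar\Lambda\preceq0$.

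It remains to unravel the bookkeeping. Subtracting the sigma term adds $+2\tr(hA^{\dagger}h\,\Lambda)$; substituting $h=\grad G(x)d=\sum_i d_i\,\partial G(x)/\partial x_i$ and using the cyclic and bilinear properties of the trace yields $2\tr(hA^{\dagger}h\,\Lambda)=2\sum_{i,j}d_id_j\,\tr\big(\tfrac{\partial G(x)}{\partial x_i}G(x)^{\dagger}\tfrac{\partial G(x)}{\partial x_j}\Lambda\big)=\inner{H(x,\Lambda)d}{d}$, with $H(x,\Lambda)$ exactly as in \eqref{eq:sosc_sdp.2}. Thus the abstract sufficient condition becomes \eqref{eq:sosc_sdp.1} and the abstract necessary condition becomes \eqref{eq:sonc_sdp}, and the claimed sufficiency and necessity for local optimality follow from the abstract no-gap theorem (which itself is proved by a second-order Taylor expansion of $f$ along feasible arcs of $\{G(x)\in\PSDcone{m}\}$ combined with the parabolic second-order approximation of that set).

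I expect the main obstacle to be getting the second-order tangent set $T^2_{\PSDcone{m}}(A,h)$ right with the correct sign on the $2hA^{\dagger}h$ shift, and verifying that strict complementarity is precisely what is needed for the sigma term to be finite on all of $\mathcal{C}(x)$ and to collapse to the clean trace expression: had strict complementarity failed, the image of $\Lambda$ could meet the conical part of $T^2_{\PSDcone{m}}(A,h)$, the support function would be $+\infty$ there, and the curvature correction could no longer be represented by a single fixed matrix $H$. The remaining point, that $\PSDcone{m}$ is second-order regular so that outer and inner second-order tangent sets share a support function, is standard and can simply be cited. Since this statement is Theorem~9 of \cite{shapiro97}, one may of course also just cite that reference; the sketch above is the route I would take to reconstruct it.
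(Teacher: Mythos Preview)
Your proposal is correct, and in fact goes well beyond what the paper does: the paper does not prove this proposition at all but simply cites it as Theorem~9 in Shapiro~\cite{shapiro97}. Your sketch --- specializing the abstract no-gap theory to $K=\PSDcone{m}$, computing the second-order tangent set via a Schur-complement argument in diagonalized coordinates, and identifying the sigma term with $\inner{H(x,\Lambda)d}{d}$ --- is precisely the route Shapiro takes, so your reconstruction is faithful to the original source and your closing remark (that one may simply cite \cite{shapiro97}) is exactly what the paper opts for.
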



\section{Equivalence between KKT points}\label{sec:kkt}

Let us now establish the relation between KKT points of the original
problem~\eqref{eq:sdp} and its reformulation~\eqref{eq:sdps}. We start
with the following simple implication.

\begin{proposition}
  \label{prop:sdp_to_sdps}
  Let $(x,\Lambda) \in \Re^n \times \S^m$ be a KKT pair of problem
  \eqref{eq:sdp}. Then, there exists $Y \in \S^m$ such that
  $(x,Y,\Lambda)$ is a KKT triple of \eqref{eq:sdps}.
\end{proposition}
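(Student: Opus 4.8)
The plan is to start from a KKT pair $(x,\Lambda)$ of~\eqref{eq:sdp} and exhibit an explicit slack matrix $Y$ so that the triple $(x,Y,\Lambda)$ satisfies \eqref{eq:kkt_sdps.1}--\eqref{eq:kkt_sdps.3}. Since \eqref{eq:kkt_sdp.1} and \eqref{eq:kkt_sdps.1} are literally the same equation, that condition is free. The remaining work is to produce $Y$ with $G(x) - Y\circ Y = 0$ and $\Lambda\circ Y = 0$ simultaneously. The obvious candidate is $Y := \sqrt{G(x)}$, the positive semidefinite square root of $G(x)$, which exists because $G(x)\succeq 0$ by \eqref{eq:kkt_sdp.3}. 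With this choice $Y\circ Y = Y^2 = G(x)$, so \eqref{eq:kkt_sdps.3} holds immediately.

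It remains to check $\Lambda\circ Y = 0$, i.e. $\Lambda Y + Y\Lambda = 0$. Here I would invoke the complementarity structure already spelled out in the excerpt: from \eqref{eq:kkt_sdp.4} we have $\Lambda G(x) = 0$, and $\Lambda$ and $G(x)$ commute and are simultaneously diagonalizable by an orthogonal matrix $Q$ (Lemma~\ref{lem:matrices}(d),(e)). In that common eigenbasis, $G(x)$ is diagonal with nonnegative entries and $\Lambda$ is diagonal, and $\Lambda G(x) = 0$ forces the supports to be disjoint: on any coordinate where $G(x)$ has a positive eigenvalue, $\Lambda$ has a zero eigenvalue. The square root $\sqrt{G(x)}$ is then diagonal in the same basis, with the same support as $G(x)$, hence still annihilated by $\Lambda$ on both sides; so $\Lambda Y = Y\Lambda = 0$ and in particular $\Lambda\circ Y = 0$. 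Equivalently and more slickly, $\inner{\Lambda}{Y\circ Y} = \inner{\Lambda}{G(x)} = 0$ with $\Lambda, Y^2 \succeq 0$ gives $\Lambda Y^2 = 0$ by Lemma~\ref{lem:matrices}(e); then $\Lambda$ commutes with $Y^2$, hence with its unique PSD square root $Y$, and $0 = \Lambda Y^2 = (\Lambda Y)Y$ together with $\Lambda Y = Y\Lambda$ and the fact that $Y$ and $\Lambda Y$ share a diagonalizing basis yields $\Lambda Y = 0$.

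I do not expect any serious obstacle here — this proposition is the ``easy direction'' and the entire argument is an exercise in simultaneous diagonalization plus taking square roots. The only point requiring a little care is justifying that a PSD matrix commutes with the PSD square root of any PSD matrix it commutes with (or equivalently that $\Lambda Y^2 = 0 \implies \Lambda Y = 0$ for PSD $\Lambda, Y$); this is standard and follows at once by passing to the common eigenbasis, but it is worth stating explicitly rather than leaving implicit. One could also phrase the whole thing without choosing the square root: any $Y$ with $Y\circ Y = G(x)$ and $Y$ diagonal in the eigenbasis of $G(x)$ supported on the positive part works, but picking $Y = \sqrt{G(x)}$ keeps the construction canonical and the verification shortest.
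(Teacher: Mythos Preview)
Your proposal is correct and follows essentially the same approach as the paper: take $Y = \sqrt{G(x)}$, note that \eqref{eq:kkt_sdps.1} and \eqref{eq:kkt_sdps.3} are immediate, and then deduce $\Lambda Y = 0$ from $\Lambda G(x) = 0$. The only minor difference is in this last step: where you invoke simultaneous diagonalization, the paper uses the slightly shorter observation that $G(x)\Lambda = 0$ means every column of $\Lambda$ lies in $\ker G(x) = \ker Y$ (since $G(x) = Y^2$ with $Y \succeq 0$), hence $Y\Lambda = 0$.
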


\begin{proof}
 Let $Y \in \PSDcone{m}$ be the positive semidefinite matrix satisfying $G(x) = \jProd{Y}{Y}$. Let 
 us show that $(x,Y,\Lambda)$ is a KKT triple of \eqref{eq:sdps}.
  The conditions~\eqref{eq:kkt_sdps.1} and \eqref{eq:kkt_sdps.3} are immediate. 
  We need to show that \eqref{eq:kkt_sdps.2} holds. 
  
  Recall that~\eqref{eq:kkt_sdp.4} along with~\eqref{eq:kkt_sdp.2} 
  and~\eqref{eq:kkt_sdp.3}
  implies $G(x)\Lambda = 0$, due to Lemma~\ref{lem:matrices}(e). It means that every column of $\Lambda$ lies 
  in the kernel of $G(x)$. However, $G(x)$ and $Y$ share exactly the same 
  kernel, since $G(x) = Y^2$. It follows that $Y\Lambda = 0$, so that $\jProd{Y}{\Lambda} = 0$.  

\end{proof}

The converse is not always true. That is, even if $(x,Y,\Lambda)$ is a KKT triple
of \eqref{eq:sdps}, $(x,\Lambda)$ may fail to be a KKT pair 
of \eqref{eq:sdp}, since $\Lambda$ need not be positive semidefinite. 
This, however, is the only obstacle for establishing equivalence. 

\begin{proposition}
If $(x,Y,\Lambda) \in \Re^n \times \S^m \times \S^m$ is a KKT triple of \eqref{eq:sdps} such that  $\Lambda$ is positive semidefinite, 
then $(x,\Lambda)$ is a KKT pair of \eqref{eq:sdp}.
\end{proposition}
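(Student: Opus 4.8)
The plan is to verify the four KKT conditions \eqref{eq:kkt_sdp.1}--\eqref{eq:kkt_sdp.4} for \eqref{eq:sdp} directly from the three KKT conditions \eqref{eq:kkt_sdps.1}--\eqref{eq:kkt_sdps.3} for \eqref{eq:sdps} together with the extra hypothesis $\Lambda \succeq 0$. Condition \eqref{eq:kkt_sdp.1} is literally the same as \eqref{eq:kkt_sdps.1}, so nothing is needed there. Condition \eqref{eq:kkt_sdp.2} is exactly the added hypothesis. Condition \eqref{eq:kkt_sdp.3} follows immediately from \eqref{eq:kkt_sdps.3}: since $G(x) = Y \circ Y = Y^2$ and $Y \in \S^m$, the matrix $G(x)$ is a square of a symmetric matrix, hence positive semidefinite by \eqref{eq:sdp_cone}. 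So the only real content is deriving the complementarity condition \eqref{eq:kkt_sdp.4}, namely $\Lambda \circ G(x) = 0$.

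For the complementarity condition, I would argue as follows. From \eqref{eq:kkt_sdps.2} we have $\Lambda \circ Y = 0$, i.e. $\Lambda Y + Y \Lambda = 0$. The goal is to upgrade this to $\Lambda Y^2 = 0$ (equivalently $\Lambda \circ G(x) = 0$, since $\Lambda G(x) = G(x)\Lambda$ would then follow and $\Lambda \circ G(x) = \tfrac12(\Lambda G(x) + G(x)\Lambda)$). One clean way: from $\Lambda Y = -Y\Lambda$ we get $\Lambda Y^2 = (\Lambda Y)Y = (-Y\Lambda)Y = -Y(\Lambda Y) = -Y(-Y\Lambda) = Y^2\Lambda$, so $\Lambda$ commutes with $G(x) = Y^2$; in particular $\Lambda \circ G(x) = \Lambda G(x)$. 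It remains to see this product is $0$. Here I would bring in $\Lambda \succeq 0$: take the positive semidefinite square root $\sqrt{\Lambda}$. Then $0 = \Lambda \circ Y$ implies, upon taking the trace against $Y$ — or more directly — that $\langle \Lambda, Y^2 \rangle = \tr(\Lambda Y^2)$. Using $\Lambda Y = -Y\Lambda$, compute $\tr(\Lambda Y^2) = \tr(Y\Lambda Y) \cdot$-type manipulations: actually $\tr(\Lambda Y \cdot Y) = \tr(-Y\Lambda Y) = -\tr(Y\Lambda Y)$, and also $\tr(\Lambda Y^2) = \tr(Y^2 \Lambda) = \tr(Y \cdot Y\Lambda) = \tr(Y \cdot (-\Lambda Y)) = -\tr(Y\Lambda Y)$, consistent but not yet zero. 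Instead, write $\tr(\Lambda Y^2) = \tr(\sqrt{\Lambda}\sqrt{\Lambda}\,Y^2) = \tr(\sqrt{\Lambda}\,Y^2\sqrt{\Lambda})$ using that $\Lambda$ commutes with $Y^2$; this equals $\|\,Y\sqrt{\Lambda}\,\|^2$-like... more carefully, $\tr(\sqrt{\Lambda}Y^2\sqrt{\Lambda}) = \tr((Y\sqrt{\Lambda})^\T(Y\sqrt{\Lambda}))$ since $\sqrt{\Lambda}$ and $Y$ are symmetric, hence $= \|Y\sqrt{\Lambda}\|^2 \ge 0$. On the other hand $\tr(\sqrt{\Lambda}Y^2\sqrt{\Lambda}) = \tr(\Lambda \circ Y \cdot Y) = 0$ because $\Lambda \circ Y = 0$. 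Therefore $Y\sqrt{\Lambda} = 0$, whence $Y^2\sqrt{\Lambda} = 0$ and finally $G(x)\Lambda = Y^2 \sqrt{\Lambda}\sqrt{\Lambda} = 0$, giving \eqref{eq:kkt_sdp.4}.

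I would actually present this more simply, mirroring the style of Proposition~\ref{prop:sdp_to_sdps}: from $\Lambda \circ Y = 0$ and $\Lambda \succeq 0$ one concludes $\Lambda Y = 0$. Indeed $\Lambda \circ Y = 0$ means $\Lambda Y = -Y\Lambda$; multiplying on the left by $\sqrt{\Lambda}$ gives $\sqrt{\Lambda}\Lambda Y = -\sqrt{\Lambda}Y\Lambda$, and one can iterate/symmetrize to pin down $\sqrt{\Lambda}Y = 0$ via the trace argument above (this is the cleanest route). Once $\Lambda Y = 0$, we get $\Lambda Y^2 = 0$, i.e. $\Lambda G(x) = 0$, hence $G(x)\Lambda = 0$ by transposing (both symmetric), hence $\Lambda \circ G(x) = 0$, which is \eqref{eq:kkt_sdp.4}. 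Then invoke Lemma~\ref{lem:matrices}(e) if one prefers the $\langle \Lambda, G(x)\rangle = 0$ formulation. Alternatively, I could reuse Lemma~\ref{lemma:psd}-style reasoning: $\Lambda \circ Y = 0$ and $\Lambda \succeq 0$ imply (by simultaneously diagonalizing, since the anticommutation $\Lambda Y = -Y\Lambda$ combined with $\Lambda \succeq 0$ forces them to actually commute on the range of $\Lambda$) that $Y$ vanishes on the range of $\Lambda$, which is exactly $\Lambda Y = 0$.

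The main obstacle is the step $\Lambda \circ Y = 0 \implies \Lambda Y = 0$: anticommutation alone is weaker than $\Lambda Y = 0$ (consider $\Lambda = \bigl(\begin{smallmatrix}1&0\\0&0\end{smallmatrix}\bigr)$ versus a sign-indefinite $Y$), so it is genuinely essential to use $\Lambda \succeq 0$, and the trace-with-$\sqrt{\Lambda}$ computation, or equivalently the observation that a positive semidefinite matrix anticommuting with a symmetric matrix must in fact annihilate it, is where the real work lies. Everything else — conditions \eqref{eq:kkt_sdp.1}, \eqref{eq:kkt_sdp.2}, \eqref{eq:kkt_sdp.3} — is immediate.
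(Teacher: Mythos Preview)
Your argument is correct, but it is considerably more elaborate than the paper's. The paper dispatches \eqref{eq:kkt_sdp.4} in two lines: from \eqref{eq:kkt_sdps.2} one has
\[
0 = \inner{Y}{\jProd{Y}{\Lambda}} = \inner{\jProd{Y}{Y}}{\Lambda} = \inner{G(x)}{\Lambda},
\]
using only the self-adjointness of the Jordan product with respect to the trace inner product; then, since both $G(x)$ and $\Lambda$ are positive semidefinite, Lemma~\ref{lem:matrices}(e) gives $G(x)\Lambda = 0$ and hence $\jProd{G(x)}{\Lambda} = 0$. Your route reaches the same identity $\tr(\Lambda Y^2)=0$ (your equation $\tr((\Lambda\circ Y)\,Y)=0$ is exactly the paper's $\inner{Y}{\jProd{Y}{\Lambda}}=0$), but instead of invoking Lemma~\ref{lem:matrices}(e) you rewrite $\tr(\Lambda Y^2)=\|Y\sqrt{\Lambda}\|^2$ to conclude $Y\sqrt{\Lambda}=0$ and hence $\Lambda Y = 0$. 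This buys you the slightly stronger conclusion $\Lambda Y = 0$ (not just $\Lambda Y^2 = 0$), and it is self-contained, essentially re-proving the relevant instance of Lemma~\ref{lem:matrices}(e) inline; the anticommutation and commutation detours ($\Lambda Y = -Y\Lambda$, $\Lambda Y^2 = Y^2\Lambda$) are correct but unnecessary for the final argument. If you streamline your write-up to the trace identity plus either Lemma~\ref{lem:matrices}(e) or the one-line $\tr(\Lambda Y^2)=\|Y\sqrt{\Lambda}\|^2$ computation, you recover the paper's proof almost verbatim.
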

\begin{proof}
The only condition that remains to be verified is \eqref{eq:kkt_sdp.4}. Due 
to \eqref{eq:kkt_sdps.2}, we have
\begin{align*}
0 & = \inner{Y}{\jProd{Y}{\Lambda}} = \inner{\jProd{Y}{Y}}{\Lambda}  =  \inner{G(x)}{\Lambda}. 
\end{align*}
Since $G(x)$ and $\Lambda$ are both positive semidefinite, we must have 
$\jProd{G(x)}{\Lambda} = 0$.
\end{proof}

The previous proposition leads us to consider  conditions which 
ensure that $\Lambda$ is positive semidefinite. It turns out that if the second-order sufficient condition for 
\eqref{eq:sdps} is satisfied at $(x,Y,\Lambda)$, then $\Lambda$ is positive semidefinite. 
In fact, a weaker condition is enough to ensure positive semidefiniteness.
\begin{proposition}\label{prop:kkt_slack}
Suppose that $(x,Y,\Lambda) \in \Re^n \times \S^m \times \S^m$ is a KKT  triple of \eqref{eq:sdps} such that 
$Y \in \Phi(\Lambda)$, where $\Phi(\Lambda)$ is defined by \eqref{eq:lemma:psd}, that 
is, 
\begin{equation}\label{eq:prop_kkt_slack}
	 \inner{\jProd{W}{W}}{ \Lambda } > 0
\end{equation}
for every nonzero $W \in \S^m$ such that $\jProd{Y}{W} = 0$. 
Then $(x,\Lambda)$ is a KKT pair of \eqref{eq:sdp} satisfying strict 
complementarity. 
\end{proposition}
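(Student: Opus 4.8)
The plan is to combine Proposition~\ref{prop:kkt_slack}'s hypothesis with Lemma~\ref{lemma:psd} directly. The key observation is that the condition $Y \in \Phi(\Lambda)$ together with the KKT condition \eqref{eq:kkt_sdps.2}, namely $\jProd{Y}{\Lambda} = 0$, is \emph{precisely} statement $(ii)$ of Lemma~\ref{lemma:psd}. Therefore, by that lemma, $\Lambda \in \PSDcone{m}$. Once positive semidefiniteness of $\Lambda$ is established, the previous proposition immediately gives that $(x,\Lambda)$ is a KKT pair of \eqref{eq:sdp}, since all the KKT conditions \eqref{eq:kkt_sdp.1}--\eqref{eq:kkt_sdp.4} then follow.

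It remains to verify strict complementarity, i.e., $\matRank G(x) + \matRank \Lambda = m$. Here I would again invoke Lemma~\ref{lemma:psd}, which asserts that for any $Y$ satisfying condition $(ii)$ we have $\matRank \Lambda = m - \matRank Y$. So it suffices to show $\matRank G(x) = \matRank Y$. This follows from the feasibility condition \eqref{eq:kkt_sdps.3}, $G(x) = \jProd{Y}{Y} = Y^2$: since $Y$ is symmetric, $Y^2$ has the same kernel (equivalently, the same rank) as $Y$, so $\matRank G(x) = \matRank Y$. Combining, $\matRank G(x) + \matRank \Lambda = \matRank Y + (m - \matRank Y) = m$, which is exactly strict complementarity.

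I do not anticipate a serious obstacle: the proof is essentially an application of Lemma~\ref{lemma:psd} and the preceding proposition, and the rank bookkeeping is routine. The one point that warrants care is making sure the hypothesis of Proposition~\ref{prop:kkt_slack} is restated so that it matches statement $(ii)$ of Lemma~\ref{lemma:psd} verbatim — that is, confirming that ``$\jProd{Y}{\Lambda} = 0$ and $Y \in \Phi(\Lambda)$'' is literally what we have in hand from the KKT triple conditions plus the extra assumption \eqref{eq:prop_kkt_slack}. Given that alignment, the rest is immediate.

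So, concretely, the steps in order are: (1) read off $\jProd{Y}{\Lambda} = 0$ from \eqref{eq:kkt_sdps.2} and $Y \in \Phi(\Lambda)$ from the hypothesis \eqref{eq:prop_kkt_slack}; (2) apply Lemma~\ref{lemma:psd} to conclude $\Lambda \succeq 0$; (3) apply the previous proposition to conclude $(x,\Lambda)$ is a KKT pair of \eqref{eq:sdp}; (4) use the rank formula from Lemma~\ref{lemma:psd}, $\matRank \Lambda = m - \matRank Y$, together with $\matRank G(x) = \matRank Y$ (from $G(x) = Y^2$ with $Y$ symmetric), to get $\matRank G(x) + \matRank \Lambda = m$, i.e., strict complementarity.
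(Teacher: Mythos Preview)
Your proposal is correct and follows essentially the same route as the paper: invoke Lemma~\ref{lemma:psd} (using \eqref{eq:kkt_sdps.2} and the hypothesis $Y \in \Phi(\Lambda)$) to get $\Lambda \succeq 0$ and $\matRank \Lambda = m - \matRank Y$, then use the previous proposition and $\matRank G(x) = \matRank Y$ from $G(x) = Y^2$ to conclude. The paper's proof is terser but the logic is identical.
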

\begin{proof}
Due to Lemma \ref{lemma:psd}, $\Lambda$ is positive semidefinite and  $\matRank  Y =  m - \matRank \Lambda$. 
Now, since $G(x) = Y^2$, we have $\matRank G(x) = \matRank Y$. Therefore $(x,\Lambda)$ must satisfy the strict 
complementarity condition.
\end{proof}

\begin{corollary}\label{col:strict}
Suppose that SOSC-NLP is satisfied at a KKT triple $(x,Y,\Lambda) \in \Re^n \times \S^m \times \S^m$. Then $(x,\Lambda)$ is a 
KKT pair for \eqref{eq:sdp} which satisfies the strict complementarity condition.
\end{corollary}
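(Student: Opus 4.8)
The plan is to derive Corollary~\ref{col:strict} as a direct consequence of Proposition~\ref{prop:kkt_slack}, by verifying that the hypothesis SOSC-NLP at $(x,Y,\Lambda)$ implies the weaker hypothesis $Y \in \Phi(\Lambda)$ required there. Recall that $Y \in \Phi(\Lambda)$ means precisely that $\inner{\jProd{W}{W}}{\Lambda} > 0$ for every nonzero $W \in \S^m$ with $\jProd{Y}{W} = 0$. So the task reduces to showing that SOSC-NLP forces this inequality.

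First I would take an arbitrary nonzero $W \in \S^m$ satisfying $\jProd{Y}{W} = 0$, and consider the pair $(v,W) = (0,W) \in \Re^n \times \S^m$. This pair is nonzero since $W \ne 0$, and it lies in the critical subspace appearing in SOSC-NLP: indeed, $\grad G(x)\,v - 2\jProd{Y}{W} = \grad G(x)\cdot 0 - 2\jProd{Y}{W} = 0$ because $\jProd{Y}{W} = 0$ by assumption. Hence SOSC-NLP applies to $(0,W)$ and yields
\begin{displaymath}
  \inner{\grad_x^2 L(x,\Lambda)\cdot 0}{0} + 2\inner{\jProd{W}{W}}{\Lambda} > 0,
\end{displaymath}
which simplifies to $2\inner{\jProd{W}{W}}{\Lambda} > 0$, i.e., $\inner{\jProd{W}{W}}{\Lambda} > 0$. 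Since $W$ was an arbitrary nonzero element of the kernel of $L_Y$, this is exactly the statement that $Y \in \Phi(\Lambda)$, i.e., condition~\eqref{eq:prop_kkt_slack} of Proposition~\ref{prop:kkt_slack} holds.

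Having established $Y \in \Phi(\Lambda)$, I would then invoke Proposition~\ref{prop:kkt_slack} directly: its conclusion is that $(x,\Lambda)$ is a KKT pair of~\eqref{eq:sdp} satisfying strict complementarity, which is precisely the assertion of the corollary. There is essentially no obstacle here — the only thing worth being careful about is the reduction itself, namely recognizing that plugging $v = 0$ into the SOSC-NLP inequality kills the Hessian term and isolates the curvature term $\inner{\jProd{W}{W}}{\Lambda}$, and that the feasibility constraint $\grad G(x)v - 2\jProd{Y}{W} = 0$ is automatically satisfied when $v = 0$ and $W \in \ker L_Y$. Once that observation is in place, the result follows immediately from the already-proven Proposition~\ref{prop:kkt_slack} (which in turn rests on Lemma~\ref{lemma:psd}), so the proof is a one-line application.
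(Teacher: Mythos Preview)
Your proof is correct and follows exactly the same approach as the paper: set $v = 0$ in the SOSC-NLP inequality to obtain $Y \in \Phi(\Lambda)$, then invoke Proposition~\ref{prop:kkt_slack}. The paper's proof is simply the one-line version of what you have spelled out in detail.
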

\begin{proof}
If we take $v = 0$ in the definition of SOSC-NLP, we obtain $Y \in \Phi(\Lambda)$. 
So, the result follows from Proposition \ref{prop:kkt_slack}. 
\end{proof}
The next result is a refinement of Proposition \ref{prop:sdp_to_sdps}.
\begin{proposition}
Suppose that $(x,\Lambda) \in \Re^n \times \S^m $ is a KKT pair of \eqref{eq:sdp} which satisfies the strict complementarity condition. 
Then there exists some $Y \in \Phi(\Lambda)$ such that $(x,Y,\Lambda)$ is a KKT triple of \eqref{eq:sdps}.
\end{proposition}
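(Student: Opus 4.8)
The plan is to build on the reverse implication in Lemma~\ref{lemma:psd}: since $(x,\Lambda)$ is a KKT pair of \eqref{eq:sdp} satisfying strict complementarity, we have $\Lambda \succeq 0$ and $\matRank G(x) + \matRank \Lambda = m$. We want to produce $Y \in \Phi(\Lambda)$ with $\jProd{Y}{\Lambda} = 0$ and $G(x) = Y \circ Y$; conditions \eqref{eq:kkt_sdps.1} and \eqref{eq:kkt_sdps.3} will then be immediate, and \eqref{eq:kkt_sdps.2} is exactly $\jProd{Y}{\Lambda}=0$. The key point is that we are not free to take an arbitrary $Y$ with $Y^2 = G(x)$; we must simultaneously arrange $Y \in \Phi(\Lambda)$, which, by the proof of Lemma~\ref{lemma:psd}, forces $Y$ and $\Lambda$ to be simultaneously diagonalizable with $\ker Y = \mathrm{Im}\,\Lambda$ (equivalently $\matRank Y = m - \matRank\Lambda$).

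First I would use \eqref{eq:kkt_sdp.4} together with Lemma~\ref{lem:matrices}(d),(e): since $\Lambda \succeq 0$, $G(x) \succeq 0$ and $\jProd{\Lambda}{G(x)} = 0$, the matrices $\Lambda$ and $G(x)$ commute and hence are simultaneously diagonalizable by some orthogonal $Q$. Passing to this basis, write $\Lambda = Q^\T\,\diag(\lambda_1,\dots,\lambda_m)\,Q$ with, say, $\lambda_1,\dots,\lambda_r > 0$ and $\lambda_{r+1} = \cdots = \lambda_m = 0$, where $r = \matRank\Lambda$. By strict complementarity $\matRank G(x) = m - r$, and since $G(x)$ commutes with $\Lambda$ and $\jProd{\Lambda}{G(x)}=0$, in the same basis $G(x) = Q^\T\,\diag(g_1,\dots,g_m)\,Q$ with $g_1 = \cdots = g_r = 0$ and $g_{r+1},\dots,g_m > 0$. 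Then define $Y \doteq Q^\T\,\diag(1,\dots,1,\sqrt{g_{r+1}},\dots,\sqrt{g_m})\,Q$, i.e. put a positive definite block (for concreteness the identity) on the first $r$ coordinates and the PSD square root of $G(x)$ on the rest. By construction $Y^2 = Q^\T\,\diag(1,\dots,1,g_{r+1},\dots,g_m)\,Q$; this is not $G(x)$, so I would instead be slightly more careful — set the first block of $Y$ to the identity only to guarantee $\jProd{Y}{W}=0 \Rightarrow W$ is supported on the last $m-r$ coordinates, but then $G(x)=Y\circ Y$ fails on that block. The correct choice is $Y \doteq Q^\T\,\diag(E, \sqrt{g_{r+1}},\dots,\sqrt{g_m})\,Q$ is wrong too; the clean fix is to note we are allowed to pick \emph{any} $Y$ with $\matRank Y = m - r$ and $\ker Y = \mathrm{Im}\,\Lambda$, and the constraint $G(x) = Y\circ Y$ only pins down $Y$ on $\ker\Lambda$. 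So set $Y \doteq \sqrt{G(x)}$, the PSD square root. Since $G(x)$ commutes with $\Lambda$, so does $\sqrt{G(x)}$, hence $\sqrt{G(x)}\,\Lambda = \Lambda\,\sqrt{G(x)}$, and as $\Lambda\,\sqrt{G(x)}$ is a product of commuting PSD matrices with zero inner product (because $\inner{\Lambda}{G(x)}=0$ and the spectral supports are disjoint) we get $\sqrt{G(x)}\,\Lambda = 0$, i.e. $\jProd{Y}{\Lambda}=0$. Also $\matRank Y = \matRank G(x) = m - r = m - \matRank\Lambda$ by strict complementarity.

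It remains to verify $Y = \sqrt{G(x)} \in \Phi(\Lambda)$. In the common eigenbasis $Q$, $Y$ vanishes exactly on $\mathrm{Im}\,\Lambda$ and is positive definite on $\ker\Lambda$. If $W \in \S^m$ satisfies $\jProd{Y}{W} = 0$, then by Lemma~\ref{lem:matrices}(d) and Proposition~\ref{prop:nonsigular} applied blockwise (the nonzero eigenvalues of $Y$ are all positive, so no pair sums to zero, and $L_Y$ restricted off the kernel is invertible), $W$ must be block-diagonal with zero block on $\ker\Lambda$; that is, in the basis $Q$, $W$ is supported on $\mathrm{Im}\,\Lambda$, exactly where $\Lambda$ is positive definite. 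Hence $\inner{\jProd{W}{W}}{\Lambda} = \inner{W^2}{\Lambda} > 0$ for every nonzero such $W$, which is precisely $Y \in \Phi(\Lambda)$ as in \eqref{eq:lemma:psd}. This mirrors the ``$(i)\Rightarrow(ii)$'' argument in the proof of Lemma~\ref{lemma:psd}, with $E$ there realized concretely. The main obstacle, and the only place requiring real care, is this last coupling step: one must choose $Y$ so that both $Y^2 = G(x)$ \emph{and} the kernel of $L_Y$ intersects $\S^m$ precisely in matrices supported on the positive-definite part of $\Lambda$ — the PSD square root of $G(x)$ does the job precisely because strict complementarity makes $\ker\sqrt{G(x)} = \mathrm{Im}\,\Lambda$ and commutativity makes the construction consistent; without strict complementarity there would be a ``middle'' subspace on which neither $Y$ nor $\Lambda$ is invertible, and $\Phi(\Lambda)$ would be empty.
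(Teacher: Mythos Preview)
Your proposal, once it settles on $Y = \sqrt{G(x)}$, is correct and follows essentially the same route as the paper: simultaneously diagonalize $G(x)$ and $\Lambda$, use strict complementarity to see that $\Lambda$ is positive definite exactly on $\ker G(x)$, take $Y = \sqrt{G(x)}$, and then verify $Y \in \Phi(\Lambda)$ by the block computation showing $\jProd{Y}{W}=0$ forces $W$ to live on the block where $\Lambda$ is positive definite (via Proposition~\ref{prop:nonsigular} on the nonsingular block of $Y$). The false starts and the stray citation of Lemma~\ref{lem:matrices}(d) can simply be excised; the core argument matches the paper's proof with only the block ordering swapped.
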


\begin{proof}
Without loss of generality, we may assume 
that $G(x)$ has the shape $\bigl(\begin{smallmatrix} A &0\\ 0&0 \end{smallmatrix} \bigr)$, 
where $A \in \PSDcone{k}$ and $k = \matRank G(x)$. 
Since $G(x)$ and $\Lambda$ are both positive semidefinite, 
the condition $\jProd{G(x)}{\Lambda} = 0$ is equivalent to $G(x)\Lambda = 0$.
It follows that  $\Lambda$ has the shape $\bigl(\begin{smallmatrix} 0 &0\\ 0&C \end{smallmatrix} \bigr)$
for some matrix $C\in \PSDcone{m-k}$. However, 
 strict complementarity holds only  if $C$ is positive definite. 
Therefore, it is enough to pick $Y$ to be the positive semidefinite 
matrix satisfying $Y^2 = G(x)$. 

Finally, note that if $W = \bigl(\begin{smallmatrix} W_1 & W_2\\ W_2^\T&W_3 \end{smallmatrix} \bigr)$, 
with $W \in \S^m, W_1 \in \S^k$, $W_2 \in \Re^{k\times (m-k)}$, $W_3 \in \S^{m-k}$,
 then the condition $\jProd{Y}{W} = 0$ together with Proposition \ref{prop:nonsigular} implies $W_1 = 0$, $W_2 = 0$.
 Since $C$ is positive definite, we must have $\inner{\Lambda}{\jProd{W}{W}} > 0$, if 
 $W \neq 0$. From \eqref{eq:lemma:psd}, this shows that $Y \in \Phi(\Lambda)$. 
\end{proof}

\section{Relations between constraint qualifications}\label{sec:cq}

In this section, we shall show that the nondegeneracy in Definition \ref{def:nondeg} is essentially 
equivalent to LICQ for \eqref{eq:sdps}. 
In \cite{shapiro97}, Shapiro mentions that the nondegeneracy 
condition for \eqref{eq:sdp} is an analogue of LICQ, but he 
also states that the analogy is imperfect. For instance, when $G(x)$ is diagonal, \eqref{eq:sdp} naturally 
becomes an NLP, since the semidefiniteness constraint is reduced to the nonnegativity of the diagonal elements. 
However, even in that case, LICQ and the nondegeneracy in Definition \ref{def:nondeg} might not be equivalent (see page 309 of \cite{shapiro97}). 
In this sense, it is interesting to see whether a correspondence between the conditions can be established when \eqref{eq:sdp} is 
reformulated as \eqref{eq:sdps}. Before that, 
we recall some facts about the geometry of the cone of positive semidefinite matrices. 

Let $A \in \PSDcone{m}$ and let $U$ be an $m\times k$ matrix whose columns form a 
basis for the kernel of $A$. Then, the tangent cone of $\PSDcone{m}$ at $A$ is 
written as
\begin{displaymath}
  \tanCone{A}{\PSDcone{m}} = \{E \in \S^m \mid U^\T EU \in \PSDcone{k}\}
\end{displaymath}
(see \cite{pataki_handbook} or \cite[Equation~26]{shapiro97}).
For example, if $A$ can be written as $\bigl(\begin{smallmatrix} D & 0 \\ 0 & 0 \end{smallmatrix} \bigr)$, where 
$D$ is positive definite, then the matrices in $\tanCone{A}{\PSDcone{m}}$ have 
the shape $\bigl(\begin{smallmatrix} C & F \\ F^\T & H \end{smallmatrix} \bigr)$, where 
the only restriction is that $H$ should be positive semidefinite.

Our first step is to notice that nondegeneracy implies 
that the only matrix which is orthogonal to both $\lineality \tanCone{G(x)}{\PSDcone{m}}$ 
and $\mathrm{Im}\, \grad G(x) $ is the trivial one, i.e., 
\begin{equation}
W \in (\lineality \tanCone{G(x)}{\PSDcone{m}})^\perp \text{ and } \grad G(x)^*W = 0 \quad \implies \quad W = 0, \tag{Nondegeneracy}\label{eq_nondegeneracy2}
\end{equation}
where $\perp$ denotes the orthogonal complement.

On the other hand, the LICQ constraint 
qualification for \eqref{eq:sdps} holds at a feasible point $(x,Y)$ if the linear function which 
maps $(v,W)$ to $\grad G(x)v -2\jProd{W}{Y}$ is surjective. This happens 
if and only if the adjoint map has trivial kernel. The adjoint map 
takes $W \in \S^m$ and maps it to $(\grad G(x)^*W, -2\jProd{W}{Y})$. So
the surjectivity assumption amounts to requiring that 
every $W$ which satisfies both $\grad G(x)^*W = 0 $ and $\jProd{W}{Y} = 0 $ must actually 
be $0$, that is,
\begin{equation}
\jProd{W}{Y} = 0 \text{ and } \grad G(x)^*W = 0 \quad \implies \quad  W = 0. \tag{LICQ}\label{eq_licq}
\end{equation} 
The subspaces $\ker L_Y = \{W \mid \jProd{Y}{W} = 0 \}$ and 
$(\lineality \tanCone{G(x)}{\PSDcone{m}})^\perp$ are closely related. The 
next proposition clarifies this connection.

\begin{proposition}\label{prop:nondeg_licq}
Let $V = Y^2$, then $(\lineality \tanCone{V}{\PSDcone{m}})^\perp \subseteq \ker L_Y$. 
If $Y$ is positive semidefinite, then $\ker L_Y \subseteq (\lineality \tanCone{V}{\PSDcone{m}})^\perp$ as 
well.
\end{proposition}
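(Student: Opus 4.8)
The plan is to reduce to the case of diagonal $Y$ and then read off both subspaces from a block decomposition, in the spirit of the proofs of Lemma~\ref{lemma:psd} and Proposition~\ref{prop:nonsigular}. All the objects appearing in the statement — the subspace $\ker L_Y$, the square $V = Y^2$, the cone $\PSDcone{m}$ together with its tangent cone at $V$, and the operation of taking orthogonal complements — are preserved, in a compatible way, by conjugation $Z \mapsto QZQ^\T$ with $Q$ orthogonal, because such a conjugation is an isometry of $\S^m$ that commutes with squaring and fixes $\PSDcone{m}$. Hence I may assume
\[
Y = \begin{pmatrix} D & 0 \\ 0 & 0 \end{pmatrix},
\]
where $D$ is an $r \times r$ nonsingular diagonal matrix, $r = \matRank Y$, and the lower-right zero block has size $m-r$. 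Then $V = Y^2 = \bigl(\begin{smallmatrix} D^2 & 0 \\ 0 & 0\end{smallmatrix}\bigr)$, whose kernel is spanned by the last $m-r$ coordinate vectors, so I can take $U = \bigl(\begin{smallmatrix} 0 \\ I_{m-r}\end{smallmatrix}\bigr)$ in the formula $\tanCone{V}{\PSDcone{m}} = \{E \mid U^\T E U \in \PSDcone{m-r}\}$ recalled above.

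First I would pin down the two subspaces explicitly. Writing $E = \bigl(\begin{smallmatrix} E_1 & E_2 \\ E_2^\T & E_3\end{smallmatrix}\bigr)$ with $E_1 \in \S^r$, $E_3 \in \S^{m-r}$, one has $U^\T E U = E_3$, so $\tanCone{V}{\PSDcone{m}} = \{E \mid E_3 \succeq 0\}$, hence $\lineality \tanCone{V}{\PSDcone{m}} = \{E \mid E_3 = 0\}$, and a one-line trace computation shows its orthogonal complement is exactly the set of matrices $\bigl(\begin{smallmatrix} 0 & 0 \\ 0 & E_3\end{smallmatrix}\bigr)$ with $E_3 \in \S^{m-r}$. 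On the other hand, for $W = \bigl(\begin{smallmatrix} W_1 & W_2 \\ W_2^\T & W_3\end{smallmatrix}\bigr)$,
\[
2 L_Y(W) = YW + WY = \begin{pmatrix} DW_1 + W_1 D & DW_2 \\ W_2^\T D & 0 \end{pmatrix},
\]
so $W \in \ker L_Y$ if and only if $W_2 = 0$ (as $D$ is nonsingular), $W_3$ is unrestricted, and $DW_1 + W_1 D = 0$, i.e.\ entrywise $(D_{ii} + D_{jj})(W_1)_{ij} = 0$.

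With these descriptions the two inclusions are immediate. For any $Y \in \S^m$, a matrix of the form $\bigl(\begin{smallmatrix} 0 & 0 \\ 0 & E_3\end{smallmatrix}\bigr)$ has $W_1 = 0$ and $W_2 = 0$, so it satisfies the conditions above and lies in $\ker L_Y$; this gives $(\lineality \tanCone{V}{\PSDcone{m}})^\perp \subseteq \ker L_Y$. When $Y \succeq 0$, the diagonal $D$ is positive definite, so $D_{ii} + D_{jj} > 0$ for all $i,j$ — equivalently, $L_D$ is invertible by Proposition~\ref{prop:nonsigular} — and therefore $DW_1 + W_1 D = 0$ forces $W_1 = 0$; combined with $W_2 = 0$ this says every $W \in \ker L_Y$ has the form $\bigl(\begin{smallmatrix} 0 & 0 \\ 0 & W_3\end{smallmatrix}\bigr)$, i.e.\ $W \in (\lineality \tanCone{V}{\PSDcone{m}})^\perp$. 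The proof presents no real difficulty; the only points to be careful about are (i) that the simultaneous reduction to diagonal $Y$ is legitimate for all three objects at once, and (ii) that positive semidefiniteness is genuinely used in the second inclusion — if $D$ had eigenvalues $D_{ii}, D_{jj}$ with $D_{ii} + D_{jj} = 0$, the corresponding $(W_1)_{ij}$ could be nonzero, producing elements of $\ker L_Y$ outside the orthogonal complement, which is exactly why the reverse inclusion is only claimed for $Y \succeq 0$.
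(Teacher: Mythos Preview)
Your proof is correct and follows essentially the same approach as the paper: reduce to diagonal $Y$ by orthogonal conjugation, compute $(\lineality \tanCone{V}{\PSDcone{m}})^\perp$ and $\ker L_Y$ explicitly in block form, and use the invertibility of $L_D$ (Proposition~\ref{prop:nonsigular}) when $D$ is positive definite to get the reverse inclusion. Your added remarks on why the reduction is legitimate and why positive semidefiniteness is genuinely needed are a nice touch but do not depart from the paper's argument.
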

\begin{proof}
Note that if $Q$ is an orthogonal matrix, then $\tanCone{Q^\T VQ}{\PSDcone{m}} = Q^\T \tanCone{V}{\PSDcone{m}} Q$. 
The same is true for $\ker L_Y$, i.e., $\ker L_{Q^\T YQ} = Q^\T \ker L_YQ $. So, without loss 
of generality, we may assume that $Y$ is diagonal and that
\begin{equation*}
Y = \begin{pmatrix}
D & 0 \\ 0 & 0
\end{pmatrix},
\end{equation*}
where $D$ is an $r\times r$ nonsingular  diagonal matrix. Then, we have  
\begin{align*}
\tanCone{V}{\PSDcone{m}} &= \left\{\begin{pmatrix}A & B \\ B^\T & C \end{pmatrix} \, \middle| \, A \in \S^{r}, B \in \Re^{r\times (m-r)},   C \in \PSDcone{m-r}    \right\}, \\
\lineality \tanCone{V}{\PSDcone{m}} &= \left\{\begin{pmatrix}A & B \\ B^\T & 0 \end{pmatrix} \, \middle| \, A \in \S^{r}, B \in \Re^{r\times (m-r)}     \right\},\\
(\lineality \tanCone{V}{\PSDcone{m}})^\perp &= \left\{\begin{pmatrix}0 & 0 \\ 0 & C \end{pmatrix} \, \middle| \, C \in \S^{m-r}   \right\}.
\end{align*}
This shows that every matrix $Z \in (\lineality \tanCone{V}{\PSDcone{m}})^\perp$ satisfies 
$YZ = 0$ and therefore lies in $\ker L_Y$. Now, the kernel of $L_Y$ can be described as follows:
\begin{equation*}
\ker L_Y = \left\{\begin{pmatrix}A & 0 \\ 0 & C \end{pmatrix} \, \middle| \, \jProd{A}{D} = 0,  C \in \S^{m-r}   \right\}.
\end{equation*}
If $Y$ is positive semidefinite, then $D$ is positive definite and the operator $L_D$ is nonsingular. Hence 
$\jProd{A}{D} = 0 $ implies $A = 0$. In this case, $\ker L_Y$ coincides with 
$(\lineality \tanCone{V}{\PSDcone{m}})^\perp$.
\end{proof}

\begin{corollary}\label{col:licq_nondeg}
If $(x,Y) \in \Re^n \times \S^m$ satisfies LICQ for the problem \eqref{eq:sdps}, then nondegeneracy is 
satisfied at $x$ for \eqref{eq:sdp}. On the other hand, if $x$ satisfies nondegeneracy
and if $Y = \sqrt{G(x)}$, then $(x,Y)$ satisfies LICQ for \eqref{eq:sdps}.
\end{corollary}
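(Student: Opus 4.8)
The plan is to read the corollary off directly from Proposition~\ref{prop:nondeg_licq}, using the dual reformulations \eqref{eq_nondegeneracy2} of nondegeneracy and \eqref{eq_licq} of LICQ that were recorded just above it. The only point needing a little care is to track which inclusion of Proposition~\ref{prop:nondeg_licq} is invoked in each direction, since one holds unconditionally while the reverse one requires $Y$ to be positive semidefinite.

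For the first implication, I would start from a pair $(x,Y)$ feasible for \eqref{eq:sdps} that satisfies LICQ. Feasibility gives $G(x) = Y \circ Y = Y^2$, so in the notation of Proposition~\ref{prop:nondeg_licq} we have $V = G(x)$. Take any $W \in \S^m$ with $W \in (\lineality \tanCone{G(x)}{\PSDcone{m}})^\perp$ and $\grad G(x)^* W = 0$. The unconditional inclusion $(\lineality \tanCone{V}{\PSDcone{m}})^\perp \subseteq \ker L_Y$ from Proposition~\ref{prop:nondeg_licq} then yields $\jProd{Y}{W} = 0$, and \eqref{eq_licq} forces $W = 0$. This is precisely \eqref{eq_nondegeneracy2}, so nondegeneracy holds at $x$.

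For the converse, suppose $x$ satisfies nondegeneracy and set $Y = \sqrt{G(x)}$, so $Y \succeq 0$ and $G(x) = Y^2 = V$. Let $W \in \S^m$ satisfy $\jProd{Y}{W} = 0$ and $\grad G(x)^* W = 0$. Since $Y$ is positive semidefinite, Proposition~\ref{prop:nondeg_licq} gives the equality $\ker L_Y = (\lineality \tanCone{V}{\PSDcone{m}})^\perp$, so $W \in (\lineality \tanCone{G(x)}{\PSDcone{m}})^\perp$; together with $\grad G(x)^* W = 0$, the nondegeneracy condition \eqref{eq_nondegeneracy2} gives $W = 0$, which is exactly \eqref{eq_licq}. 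Hence $(x,Y)$ satisfies LICQ.

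There is no genuine obstacle here --- both directions are one-line consequences of Proposition~\ref{prop:nondeg_licq}. The subtlety worth flagging is merely bookkeeping: LICQ is imposed at a feasible pair of \eqref{eq:sdps}, so the relation $G(x) = Y^2$ is available for free in the first direction, whereas in the converse it is the specific choice $Y = \sqrt{G(x)}$ that supplies the positive semidefiniteness needed to upgrade the inclusion $(\lineality \tanCone{V}{\PSDcone{m}})^\perp \subseteq \ker L_Y$ to the equality used there.
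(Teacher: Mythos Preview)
Your proof is correct and is precisely the argument the paper has in mind: the paper's own proof consists of the single line ``It follows easily from Proposition~\ref{prop:nondeg_licq},'' and you have simply unpacked that reference, invoking the unconditional inclusion for the first direction and the equality (under $Y\succeq 0$) for the second. There is nothing to add.
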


\begin{proof}
  It follows easily from Proposition~\ref{prop:nondeg_licq}.
\end{proof}


\section{Analysis of second-order sufficient conditions}\label{sec:sosc}
In this section, we examine the relations between KKT points 
of \eqref{eq:sdp} and \eqref{eq:sdps} that satisfy second-order 
sufficient conditions.

\begin{proposition}\label{prop:sosc_sdps_to_sdp}
Suppose that $(x,Y,\Lambda) \in \Re^n \times \S^m \times \S^m$ is a KKT triple of \eqref{eq:sdps} satisfying 
SOSC-NLP. Then $(x,\Lambda)$ is a KKT pair of \eqref{eq:sdp} that satisfies 
strict complementarity and \eqref{eq:sosc_sdp.1}. If additionally, $(x,Y,\Lambda)$ satisfies LICQ for \eqref{eq:sdps} or 
$(x,\Lambda)$ satisfies nondegeneracy for \eqref{eq:sdp}, then $(x,\Lambda)$ satisfies SOSC-SDP as well.
\end{proposition}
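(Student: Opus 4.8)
The plan is to break the proof into three parts: (a) $(x,\Lambda)$ is a KKT pair of \eqref{eq:sdp} satisfying strict complementarity; (b) the inequality \eqref{eq:sosc_sdp.1} holds for every nonzero $d \in \mathcal{C}(x)$; and (c) under the extra hypothesis, $(x,\Lambda)$ satisfies SOSC-SDP in full. Part (a) is essentially for free: setting $v = 0$ in SOSC-NLP shows that $Y \in \Phi(\Lambda)$, so Corollary~\ref{col:strict} (equivalently Proposition~\ref{prop:kkt_slack}) applies; moreover Lemma~\ref{lemma:psd} additionally gives $\Lambda \succeq 0$, $\matRank \Lambda = m - \matRank Y$, and that $\sigma + \sigma' \neq 0$ for any two nonzero eigenvalues $\sigma,\sigma'$ of $Y$. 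Part (c) will follow from Corollary~\ref{col:licq_nondeg}: LICQ for \eqref{eq:sdps} at $(x,Y,\Lambda)$ implies nondegeneracy for \eqref{eq:sdp} at $x$, and in the other variant nondegeneracy is assumed directly; combined with (a) and (b) this is exactly what the statement of SOSC-SDP requires.

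The core is part (b). Since the constructions are equivariant under $Z \mapsto Q^\T Z Q$ for orthogonal $Q$, I would first normalize to $Y = \bigl(\begin{smallmatrix} D & 0 \\ 0 & 0 \end{smallmatrix}\bigr)$ with $D$ an $r \times r$ nonsingular diagonal matrix, $r = \matRank Y$. Then $G(x) = \jProd{Y}{Y} = \bigl(\begin{smallmatrix} D^2 & 0 \\ 0 & 0 \end{smallmatrix}\bigr)$ and $G(x)^{\dagger} = \bigl(\begin{smallmatrix} D^{-2} & 0 \\ 0 & 0 \end{smallmatrix}\bigr)$. From $\jProd{\Lambda}{Y} = 0$ (condition \eqref{eq:kkt_sdps.2}) together with invertibility of $L_D$ --- which holds by Proposition~\ref{prop:nonsigular} and the eigenvalue property just recalled --- one gets $\Lambda = \bigl(\begin{smallmatrix} 0 & 0 \\ 0 & C \end{smallmatrix}\bigr)$, and strict complementarity forces $C \in \S^{m-r}$ to be positive definite. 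Using Lemma~\ref{lem:matrices}(c) one also obtains the identity $\inner{H(x,\Lambda)d}{d} = 2\tr\!\big(M\,G(x)^{\dagger}M\Lambda\big)$, where $M \doteq \grad G(x)d$.

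Next I would identify the block structure of $M = \bigl(\begin{smallmatrix} M_1 & M_2 \\ M_2^\T & M_3 \end{smallmatrix}\bigr)$ for $d \in \mathcal{C}(x)$: the tangent-cone description recalled in Section~\ref{sec:cq} gives $M_3 \succeq 0$, while $\inner{\grad f(x)}{d} = 0$ and the KKT relation \eqref{eq:kkt_sdps.1} give $0 = \inner{\Lambda}{M} = \inner{C}{M_3}$, so $M_3 = 0$ because $C$ is positive definite. Now I would solve $2\jProd{Y}{W} = M$ for a symmetric $W = \bigl(\begin{smallmatrix} W_1 & W_2 \\ W_2^\T & 0 \end{smallmatrix}\bigr)$; block multiplication reduces this to $2\jProd{D}{W_1} = M_1$ (solvable since $L_D$ is invertible) and $DW_2 = M_2$, i.e.\ $W_1 = \tfrac12 L_D^{-1}(M_1)$ and $W_2 = D^{-1}M_2$. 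A direct block computation then shows that both $2\inner{\jProd{W}{W}}{\Lambda}$ and $\inner{H(x,\Lambda)d}{d}$ equal $2\tr(M_2^\T D^{-2}M_2 C)$, hence they coincide. Since $\grad G(x)d - 2\jProd{Y}{W} = 0$ and $(d,W) \neq 0$ whenever $d \neq 0$, SOSC-NLP applied to $(d,W)$ gives $\inner{\grad_x^2 L(x,\Lambda)d}{d} + 2\inner{\jProd{W}{W}}{\Lambda} > 0$, which is exactly \eqref{eq:sosc_sdp.1}.

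The main obstacle --- and the only spot where genuine SDP structure is used --- is the matching identity $2\inner{\jProd{W}{W}}{\Lambda} = \inner{H(x,\Lambda)d}{d}$, together with the solvability of $2\jProd{D}{W_1} = M_1$; both rest on $L_D$ being invertible, which in turn comes from the eigenvalue statement in Lemma~\ref{lemma:psd} (via Proposition~\ref{prop:nonsigular}) and strict complementarity. The remaining steps are routine $2\times 2$ block-matrix bookkeeping, plus the previously established correspondences in Corollaries~\ref{col:strict} and~\ref{col:licq_nondeg}.
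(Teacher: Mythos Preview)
Your proposal is correct and follows essentially the same approach as the paper: normalize to a block-diagonal form (you diagonalize $Y$, the paper diagonalizes $G(x)$, but since $Y^2=G(x)$ these are equivalent), use Lemma~\ref{lemma:psd} with Proposition~\ref{prop:nonsigular} to get $L_D$ invertible, deduce $M_3=0$ for critical directions, solve $2\jProd{Y}{W}=M$ with vanishing lower-right block, and verify that $2\inner{\jProd{W}{W}}{\Lambda}=\inner{H(x,\Lambda)d}{d}$ before invoking SOSC-NLP. The only cosmetic difference is that the paper parametrizes the solutions $W_Z$ by an arbitrary lower-right block $Z$ and then sets $Z=0$, whereas you go straight to $Z=0$.
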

\begin{proof}
In Corollary \ref{col:strict}, we have already shown that $(x,\Lambda)$ is a KKT 
pair of \eqref{eq:sdp} and strict complementarity is satisfied. In addition, 
if $(x,Y,\Lambda)$ satisfies LICQ for \eqref{eq:sdps}, then Corollary \ref{col:licq_nondeg} ensures 
that $(x,\Lambda)$ satisfies nondegeneracy for \eqref{eq:sdp}. It only 
remains to  show that 
\eqref{eq:sosc_sdp.1} is also satisfied. 
To this end, consider an arbitrary nonzero $d \in \Re^n$ such that  $\grad G(x)d \in \tanCone{G(x)}{\PSDcone{m}}$ and 
$\inner{\grad f(x)} { d} = 0$. We are thus required to show that
\begin{equation}\label{eq_objective}
\inner{(\grad _x ^2 L(x,\Lambda) + H(x,\Lambda))d}{d} > 0,
\end{equation}
where $H(x,\Lambda)$ is defined in \eqref{eq:sosc_sdp.2}.
A first observation is that, due to \eqref{eq:kkt_sdp.1}, 
we have $\inner{\grad G(x)d}{\Lambda}  = \inner{d}{\grad G(x)^*\Lambda} = \inner{d} {\grad f(x)}= 0$, 
that is, $\grad G(x)d \in \{\Lambda\}^\perp$.
We recall that $H(x,\Lambda)$ satisfies
\begin{equation}\label{eq:hessian}
\inner{H(x,\Lambda)d}{d} = 2\inner{\Lambda}{ (\grad G(x)d)^\T G^\dagger(x) \grad G(x)d}.
\end{equation}
The strategy here is to first identify the shape and properties of 
  several matrices involved, before showing that \eqref{eq_objective} holds.
Without loss of generality, we may assume that $G(x)$ is diagonal, i.e.,
\begin{equation*}
G(x) = \begin{pmatrix} D & 0 \\ 0 & 0 \end{pmatrix},
\end{equation*}
where $D$ is a $k\times k$ diagonal  positive definite  matrix. 
We also have
\begin{equation*}
Y = \begin{pmatrix} E & 0 \\ 0 & 0 \end{pmatrix},
\end{equation*}
with $E$ an invertible diagonal matrix such that $E^2 = D$. Since SOSC-NLP holds, considering $v = 0$ in 
\eqref{eq:sosc_nlp.1}, we obtain $\inner{\jProd{W}{W}}{\Lambda} > 0$ for all nonzero $W \in \S^m$ such 
that $\jProd{W}{Y} = 0$,  which, by \eqref{eq:lemma:psd}, shows that $Y \in \Phi(\Lambda)$.
From \eqref{eq:kkt_sdps.2}, we also have $\jProd{Y}{\Lambda} = 0$. 
Thus, Lemma \ref{lemma:psd} ensures that every pair $\sigma, \sigma'$ of nonzero eigenvalues of $Y$ 
satisfies $\sigma + \sigma'\neq 0$. Since the eigenvalues of $E$ are precisely the 
nonzero eigenvalues of $Y$, it follows that $L_E$ is an invertible operator, 
by virtue of Proposition \ref{prop:nonsigular}. 
Moreover, due to the strict  complementarity condition, we obtain
\begin{equation*}
\Lambda = \begin{pmatrix} 0 & 0 \\ 0 & \Gamma \end{pmatrix},
\end{equation*}
where $\Gamma \in \PSDcone{m-k}$ is positive definite. The pseudo-inverse of $G(x)$ is given by
\begin{equation*}
G(x)^\dagger = \begin{pmatrix} D^{-1} & 0 \\ 0 & 0 \end{pmatrix}.
\end{equation*}
We partition $\grad G(x)d$ in blocks in the following fashion:
\begin{equation*}
\grad G(x)d = \begin{pmatrix} A & B \\ B^\T & C \end{pmatrix},
\end{equation*}
where $A \in \S^k$, $B\in \Re^{k\times (m-k)}$ and $C \in \S^{m-k}$.
Inasmuch as $\grad G(x)d$ lies in the tangent cone $\tanCone{G(x)}{\PSDcone{m}}$, $C$ must be positive semidefinite. 
However, as observed earlier, we have $\inner{\grad G(x)d}{\Lambda} = 0$, which yields $\inner{C}{\Gamma} = 0$. 
Since $\Gamma$ is positive definite, this implies $C = 0$, and hence 
\begin{equation*}
\grad G(x)d = \begin{pmatrix} A & B \\ B^\T & 0 \end{pmatrix}.
\end{equation*}
We are now ready to show that \eqref{eq_objective} holds. We shall 
do that by considering $v = d$ in \eqref{eq:sosc_nlp.1} and exhibiting some  $W$ such that 
$\grad G(x)d - 2\jProd{Y}{W} = 0$ and $2\inner{\jProd{W}{W}}{\Lambda} = \inner{H(x,\Lambda)d}{d}$.
Then  SOSC-NLP  will ensure that \eqref{eq_objective} holds. Note 
that, for any $Z\in \S^{m-k} $, 
\begin{equation*}
W_Z = \begin{pmatrix}L_E^{-1} (A)/2 & E^{-1}B \\ B^\T E^{-1} & Z \end{pmatrix}
\end{equation*}
is a solution to the equation $\grad G(x)d - 2\jProd{Y}{W} = 0$. Moreover, any solution to that 
equation must have this particular shape. 
Therefore, the proof will be complete if we can choose $Z$ such that  $2\inner{\jProd{W}{W}}{\Lambda} = \inner{H(x,\Lambda)d}{d}$ holds.
Observe that, by \eqref{eq:hessian},
\begin{align}
2\inner{\jProd{W_Z}{W_Z}}{\Lambda} -\inner{H(x,\Lambda)d}{d} & = 2\inner{W_Z^2}{\Lambda} - 2\inner{ (\grad G(x)d)^\T G^\dagger(x) \grad G(x)d}{\Lambda} \nonumber\\
& = 2\inner{Z^2 + B^\T E^{-2}B }{\Gamma} - 2 \inner{B^\T D^{-1}B }{\Gamma} \nonumber \\
& = \inner{2Z^2 + 2B^\T D^{-1}B - 2B^\T D^{-1}B}{\Gamma} \nonumber\\
 & = \inner{2Z^2 }{\Gamma}. \label{eq_non_negative}
\end{align}
Thus, taking $Z = 0$ yields $2\inner{\jProd{W_Z}{W_Z}}{\Lambda} = \inner{H(x,\Lambda)d}{d}$. 
This completes the proof.
\end{proof}

\begin{proposition}\label{prop:sosc_sdp_to_sdps}
Suppose that $(x,\Lambda) \in \Re^n \times \S^m$ is a KKT pair for \eqref{eq:sdp} 
satisfying~\eqref{eq:sosc_sdp.1} and the strict complementarity condition. Then, there exists $Y \in \S^m$ such 
that $(x,Y,\Lambda)$ is a KKT triple for \eqref{eq:sdps} satisfying SOSC-NLP.
\end{proposition}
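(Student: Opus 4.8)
The plan is to take $Y = \sqrt{G(x)}$, the positive semidefinite square root of $G(x)$. By the refinement of Proposition~\ref{prop:sdp_to_sdps} proved just above (whose proof chooses precisely this $Y$), the triple $(x,Y,\Lambda)$ is already a KKT triple of~\eqref{eq:sdps} with $Y \in \Phi(\Lambda)$; hence only SOSC-NLP remains to be checked. So I fix a nonzero $(v,W) \in \Re^n \times \S^m$ with $\grad G(x)v - 2\jProd{Y}{W} = 0$ and aim to show $\inner{\grad_x^2 L(x,\Lambda)v}{v} + 2\inner{\jProd{W}{W}}{\Lambda} > 0$.

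The case $v = 0$ is handled directly: then $\jProd{Y}{W} = 0$ with $W \neq 0$, so $Y \in \Phi(\Lambda)$ and the definition~\eqref{eq:lemma:psd} give $\inner{\jProd{W}{W}}{\Lambda} > 0$. For $v \neq 0$ the strategy is to show $v \in \mathcal{C}(x)$ and then apply~\eqref{eq:sosc_sdp.1}. That $\inner{\grad f(x)}{v} = 0$ follows by combining~\eqref{eq:kkt_sdp.1}, the constraint, the self-adjointness of $L_Y$, and~\eqref{eq:kkt_sdps.2}: $\inner{\grad f(x)}{v} = \inner{\Lambda}{\grad G(x)v} = 2\inner{\Lambda}{\jProd{Y}{W}} = 2\inner{\jProd{Y}{\Lambda}}{W} = 0$. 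For tangent-cone membership, let the columns of $U$ span $\ker G(x)$; since $G(x) = Y^2$ we also have $YU = 0$, so $U^\T(\grad G(x)v)U = 2U^\T(\jProd{Y}{W})U = 0 \succeq 0$, which by the description of $\tanCone{G(x)}{\PSDcone{m}}$ recalled in Section~\ref{sec:cq} means $\grad G(x)v \in \tanCone{G(x)}{\PSDcone{m}}$. Hence $v \in \mathcal{C}(x)$ and $\inner{(\grad_x^2 L(x,\Lambda)+H(x,\Lambda))v}{v} > 0$.

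It then suffices to show $2\inner{\jProd{W}{W}}{\Lambda} \geq \inner{H(x,\Lambda)v}{v}$, and for this I would reuse the normalization in the proof of Proposition~\ref{prop:sosc_sdps_to_sdp}: assume $G(x) = \bigl(\begin{smallmatrix} D & 0 \\ 0 & 0\end{smallmatrix}\bigr)$ with $D$ positive definite diagonal, so $Y = \bigl(\begin{smallmatrix} E & 0 \\ 0 & 0\end{smallmatrix}\bigr)$ with $E = \sqrt{D}$ (and $L_E$ invertible by Proposition~\ref{prop:nonsigular}, since $E$ is positive definite), and $\Lambda = \bigl(\begin{smallmatrix} 0 & 0 \\ 0 & \Gamma\end{smallmatrix}\bigr)$ with $\Gamma$ positive definite, using strict complementarity. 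Partitioning $\grad G(x)v = \bigl(\begin{smallmatrix} A & B \\ B^\T & C\end{smallmatrix}\bigr)$ and matching blocks in $\grad G(x)v = 2\jProd{Y}{W}$ forces $C = 0$ and pins $W$ down to the form $W_Z = \bigl(\begin{smallmatrix} L_E^{-1}(A)/2 & E^{-1}B \\ B^\T E^{-1} & Z\end{smallmatrix}\bigr)$ for some $Z \in \S^{m-k}$, exactly as before; the same block computation that produced~\eqref{eq_non_negative} then yields $2\inner{\jProd{W_Z}{W_Z}}{\Lambda} - \inner{H(x,\Lambda)v}{v} = 2\inner{Z^2}{\Gamma} \geq 0$. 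Combining with the previous paragraph, $\inner{\grad_x^2 L(x,\Lambda)v}{v} + 2\inner{\jProd{W}{W}}{\Lambda} \geq \inner{(\grad_x^2 L(x,\Lambda)+H(x,\Lambda))v}{v} > 0$, which is SOSC-NLP.

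I expect the only real obstacle to be bookkeeping: one must be careful that the inequality $2\inner{\jProd{W}{W}}{\Lambda} \geq \inner{H(x,\Lambda)v}{v}$ is established for \emph{every} $W$ satisfying the constraint --- which is why it matters that any such $W$ necessarily has the form $W_Z$ --- rather than for a single convenient choice. This step, together with the tangent-cone membership, is where strict complementarity (forcing $C = 0$ and $\Gamma$ positive definite) and the invertibility of $L_E$ are genuinely used; everything else simply runs the argument of Proposition~\ref{prop:sosc_sdps_to_sdp} in reverse.
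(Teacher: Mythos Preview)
Your proposal is correct and follows essentially the same route as the paper: choose $Y = \sqrt{G(x)}$, handle $v=0$ via $Y \in \Phi(\Lambda)$, show $v \in \mathcal{C}(x)$ when $v \neq 0$, and then reduce to the block computation~\eqref{eq_non_negative} to obtain $2\inner{\jProd{W}{W}}{\Lambda} \geq \inner{H(x,\Lambda)v}{v}$. The only cosmetic differences are that you invoke $\Phi(\Lambda)$ and the $U^\T(\cdot)U$ description of the tangent cone in a coordinate-free way before diagonalizing, whereas the paper carries out the entire argument in the diagonalized frame from the start; the substance is identical.
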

\begin{proof}
Again, we assume without loss of generality that $G(x)$ is diagonal, so that 
\begin{equation*}
G(x) = \begin{pmatrix} D & 0 \\ 0 & 0 \end{pmatrix},
\end{equation*}
where $D$ is a $k\times k$ diagonal  positive definite matrix. 
Take $Y$ such that 
\begin{equation*}
Y = \begin{pmatrix} E & 0 \\ 0 & 0 \end{pmatrix},
\end{equation*}
where $E^2=D$ and $E$ is positive definite; in particular $L_E$ is invertible. Then $(x,Y,\Lambda)$ is a 
KKT triple for \eqref{eq:sdps}. Due to strict complementarity, we have 
\begin{equation*}
\Lambda = \begin{pmatrix} 0 & 0 \\ 0 & \Gamma \end{pmatrix},
\end{equation*}
where $\Gamma \in \PSDcone{m-k}$ is positive definite.
We are required to show that
\begin{equation}\label{eq_objective2}
\inner{\grad _x ^2 L(x,\Lambda)v}{v} + 2\inner{\jProd{W}{W}}{\Lambda} > 0
\end{equation}
for every nonzero $(v,W)$ such that $\grad G(x)v - 2\jProd{Y}{W} = 0$. 
So, let 
$(v,W)$ satisfy $\grad G(x)v - 2\jProd{Y}{W} = 0$. Let us first consider 
what happens when $v = 0$. Partitioning $W$ in blocks, we have
\begin{equation*}
\jProd{Y}{ \begin{pmatrix}W_1 & W_2 \\ W_2^\T & W_3 \end{pmatrix}} =  \begin{pmatrix}\jProd{E}{W_1} & EW_2/2 \\ W_2^\T E/2 & 0 \end{pmatrix}.
\end{equation*}
Recall that $L_E$ as well as $E$ is invertible. So, $\jProd{Y}{W} = 0$ implies $W_1 = 0$  and $W_2 = 0$. 
If $W\neq 0$, then $W_3$ must be nonzero, which in turn implies that $\jProd{W_3}{W_3}$ must also be nonzero.  
We then have $\inner{\jProd{W}{W}}{\Lambda} = \inner{\jProd{W_3}{W_3}}{\Gamma}$.
But $\inner{\jProd{W_3}{W_3}}{\Gamma} $ must be greater than zero, since $\Gamma$ is positive definite.
Thus, in this case, \eqref{eq_objective2} is satisfied.

Now, we suppose that $v$ is nonzero. First, 
we will show that $\grad G(x)v$ lies in the tangent cone $\tanCone{G(x)}{\PSDcone{m}} $  and that $\grad G(x)v$ 
is orthogonal to $\Lambda$, which implies $0 = \inner{\grad G(x)v}{\Lambda} = v^\T\grad G(x)^*\Lambda = v^\T \grad f(x)$. This shows that $v$ lies in the critical cone $\mathcal{C}(x)$.

Note that the image of the operator $L_Y$ only contains matrices having the 
lower right $(m-k)\times (m-k)$ block equal to zero. Therefore, $\grad G(x)v = 2\jProd{Y}{W}$ implies 
that $\grad G(x)v$ has the shape
\begin{equation*}
\grad G(x)v = \begin{pmatrix} A & B \\ B^\T & 0 \end{pmatrix}.
\end{equation*}
Hence, $\grad G(x)v \in \tanCone{G(x)}{\PSDcone{m}} $ and $\grad G(x)v$ is orthogonal to $\Lambda$. Due to SOSC-SDP, 
we must have 
\begin{equation*}
\inner{(\grad _x ^2 L(x,\Lambda) + H(x,\Lambda))v}{v} > 0.
\end{equation*}
Thus, if  $\inner{ H(x,\Lambda)v}{v} \leq 2\inner{\jProd{W}{W}}{\Lambda}$ holds, 
then we have \eqref{eq_objective2}. In fact, since $W$ satisfies 
$\grad G(x)v - 2\jProd{Y}{W} = 0$, the chain of equalities finishing at 
\eqref{eq_non_negative} readily yields $\inner{ H(x,\Lambda)v}{v} \leq 2\inner{\jProd{W}{W}}{\Lambda}$.
\end{proof}

Here, we remark one interesting consequence of the previous analysis. The second-order 
\emph{sufficient} condition for NSDPs in \cite{shapiro97} is stated under the assumption that the 
pair $(x,\Lambda)$ satisfies both strict complementarity and nondegeneracy. However, since 
\eqref{eq:sdp} and \eqref{eq:sdps} share the same local minima, 
Proposition \ref{prop:sosc_sdp_to_sdps} implies that we may remove the nondegeneracy assumption from 
SOSC-SDP. We now state a sufficient condition for \eqref{eq:sdp} based on the analysis above.

\begin{proposition}[A Sufficient Condition via Slack Variables]\label{prop:sosc_slack}
Let $(x,\Lambda) \in \Re^n \times \S^m$ be a KKT pair for \eqref{eq:sdp} satisfying 
strict complementarity. Assume also that the following condition holds:
\begin{equation}
  \inner{\grad _x ^2 L(x,\Lambda)v}{v} + 2\inner{\jProd{W}{W}}{\Lambda} > 0
\end{equation}
for every nonzero $(v,W) \in \Re^n \times \S^m$ such that $\grad G(x)v - 2\jProd{\sqrt{G(x)}}{W} = 0$.
Then, $x$~is a local minimum for \eqref{eq:sdp}.
\end{proposition}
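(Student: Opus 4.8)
The plan is to reduce the statement to the already-developed second-order theory for the reformulated problem~\eqref{eq:sdps}, rather than to Shapiro's conditions directly. First I would set $Y \doteq \sqrt{G(x)}$, the positive semidefinite square root of $G(x)$, which is well-defined because feasibility of $x$ for~\eqref{eq:sdp} gives $G(x) \succeq 0$. Since $(x,\Lambda)$ is a KKT pair of~\eqref{eq:sdp}, Proposition~\ref{prop:sdp_to_sdps} (more precisely, its proof, which uses exactly this choice of $Y$) guarantees that $(x,Y,\Lambda)$ is a KKT triple of~\eqref{eq:sdps}.

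Next I would observe that, with this $Y$, the hypothesis of the proposition is literally the statement SOSC-NLP at $(x,Y,\Lambda)$: the constraint $\grad G(x)v - 2\jProd{\sqrt{G(x)}}{W} = 0$ coincides with $\grad G(x)v - 2\jProd{Y}{W} = 0$, and the assumed inequality $\inner{\grad_x^2 L(x,\Lambda)v}{v} + 2\inner{\jProd{W}{W}}{\Lambda} > 0$ over the corresponding nonzero directions is exactly~\eqref{eq:sosc_nlp.1}. Hence, by the second-order sufficient condition for equality-constrained nonlinear programs (the SOSC-NLP proposition recalled in Section~\ref{sec:prel}), $(x,Y)$ is a local minimizer of~\eqref{eq:sdps}.

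Finally I would invoke the elementary correspondence noted in the introduction: if $(x,Y)$ is a local minimizer of~\eqref{eq:sdps}, then $x$ is a local minimizer of~\eqref{eq:sdp}, since any feasible $x'$ for~\eqref{eq:sdp} near $x$ lifts to a feasible point $(x',\sqrt{G(x')})$ for~\eqref{eq:sdps} near $(x,Y)$ with the same objective value. This gives the conclusion.

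There is essentially no serious obstacle: once $Y = \sqrt{G(x)}$ is chosen, the argument is a routine assembly of Proposition~\ref{prop:sdp_to_sdps}, the SOSC-NLP proposition, and the local-minima equivalence between~\eqref{eq:sdp} and~\eqref{eq:sdps}. The only point worth flagging is that strict complementarity, although assumed in the statement, is not actually used in this particular argument — it is retained only for uniformity with the other results of Sections~\ref{sec:kkt}--\ref{sec:sosc} — so the proof need not rely on it. (One could instead route through Proposition~\ref{prop:sosc_sdps_to_sdp} to deduce SOSC-SDP and then apply Shapiro's sufficiency theorem, but that would drag in the superfluous LICQ/nondegeneracy hypothesis, which the direct route through NLP theory avoids.)
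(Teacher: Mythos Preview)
Your proposal is correct and matches the paper's own (implicit) argument: the text surrounding Proposition~\ref{prop:sosc_slack} makes clear that the intended justification is precisely to recognize the hypothesis as SOSC-NLP at $(x,\sqrt{G(x)},\Lambda)$, conclude that $(x,\sqrt{G(x)})$ is a local minimum of~\eqref{eq:sdps}, and then pass to~\eqref{eq:sdp} via the local-minima correspondence stated in the introduction. Your observation that strict complementarity is not actually used in this route is also accurate; the paper keeps it only for consistency with the other results in the section.
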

Apart from the detail of requiring nondegeneracy, the condition above is 
equivalent to SOSC-SDP, due to Propositions \ref{prop:sosc_sdps_to_sdp} and \ref{prop:sosc_sdp_to_sdps}.

\section{Analysis of second-order necessary conditions}\label{sec:sonc}

We now take a look at the difference between second-order necessary conditions 
that can be derived from \eqref{eq:sdp} and \eqref{eq:sdps}. Since the inequalities 
\eqref{eq:sonc_nlp} and \eqref{eq:sonc_sdp} are not strict, we need a slightly 
stronger assumption to prove the next proposition.

\begin{proposition}\label{prop:sonc_sdps_to_sdp}
Suppose that $(x,Y,\Lambda) \in \Re^n \times \S^m \times \S^m$ is a KKT triple of \eqref{eq:sdps} satisfying 
LICQ and SONC-NLP. Moreover, assume that $Y$ and $\Lambda$ are positive semidefinite.  If $(x,\Lambda)$ 
is a KKT pair for \eqref{eq:sdp} satisfying strict complementarity, then it also satisfies SONC-SDP.
\end{proposition}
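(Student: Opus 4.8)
The plan is to mirror the structure of Proposition~\ref{prop:sosc_sdp_to_sdps}, but in the reverse (and weaker, non-strict) direction, and to exploit the fact that strict complementarity forces all the matrices involved into a convenient block form. First I would reduce to the case where $G(x)$ is diagonal, say $G(x) = \bigl(\begin{smallmatrix} D & 0 \\ 0 & 0 \end{smallmatrix}\bigr)$ with $D \in \PSDcone{k}$ positive definite and $k = \matRank G(x)$; since $Y$ is positive semidefinite, this forces $Y = \bigl(\begin{smallmatrix} E & 0 \\ 0 & 0 \end{smallmatrix}\bigr)$ with $E$ positive definite and $E^2 = D$, so $L_E$ is invertible by Proposition~\ref{prop:nonsigular}. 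Strict complementarity then gives $\Lambda = \bigl(\begin{smallmatrix} 0 & 0 \\ 0 & \Gamma \end{smallmatrix}\bigr)$ with $\Gamma \in \PSDcone{m-k}$ positive definite, and $G(x)^\dagger = \bigl(\begin{smallmatrix} D^{-1} & 0 \\ 0 & 0 \end{smallmatrix}\bigr)$.

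Next, I would take an arbitrary $d \in \mathcal{C}(x)$ and show $\inner{(\grad_x^2 L(x,\Lambda) + H(x,\Lambda))d}{d} \geq 0$. Since $d \in \mathcal{C}(x)$ we have $\grad G(x)d \in \tanCone{G(x)}{\PSDcone{m}}$ and $\inner{\grad f(x)}{d} = 0$; combining the latter with \eqref{eq:kkt_sdp.1} gives $\inner{\grad G(x)d}{\Lambda} = 0$, and together with the tangent-cone membership and the positive definiteness of $\Gamma$ this forces the lower-right block of $\grad G(x)d$ to vanish, so $\grad G(x)d = \bigl(\begin{smallmatrix} A & B \\ B^\T & 0 \end{smallmatrix}\bigr)$. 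Exactly as in the proof of Proposition~\ref{prop:sosc_sdps_to_sdp}, I would then exhibit a $W$ solving $\grad G(x)d - 2\jProd{Y}{W} = 0$ — namely $W = \bigl(\begin{smallmatrix} L_E^{-1}(A)/2 & E^{-1}B \\ B^\T E^{-1} & 0 \end{smallmatrix}\bigr)$ — and invoke the same chain of equalities culminating in \eqref{eq_non_negative} to conclude that $2\inner{\jProd{W}{W}}{\Lambda} = \inner{H(x,\Lambda)d}{d}$. Feeding $(v,W) = (d,W)$ into SONC-NLP \eqref{eq:sonc_nlp} (which applies because LICQ holds and $(x,Y)$ is a local minimum of \eqref{eq:sdps}, as $x$ is a local minimum of \eqref{eq:sdp}) then yields $\inner{\grad_x^2 L(x,\Lambda)d}{d} + \inner{H(x,\Lambda)d}{d} \geq 0$, which is precisely \eqref{eq:sonc_sdp}.

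One point I would need to check carefully is that $(x,Y)$ is indeed a \emph{local minimum} of \eqref{eq:sdps}, so that SONC-NLP is applicable: this follows from the introductory remarks in Section~1, since $x$ is a local minimum of \eqref{eq:sdp} and $Y = \sqrt{G(x)}$ realizes $G(x) = Y\circ Y$. The main obstacle, as flagged in the statement preceding the proposition, is the loss of strictness: unlike in Proposition~\ref{prop:sosc_sdps_to_sdp}, where any choice of the free block $Z$ in $W_Z$ works because the extra term $\inner{2Z^2}{\Gamma}$ is nonnegative, here I must take $Z = 0$ exactly, so that the inequality SONC-NLP transfers without a spurious surplus term. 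Once the block structure is pinned down by strict complementarity and the critical-cone conditions, this is forced, but it is the reason strict complementarity cannot be dispensed with in this direction, and it is worth stating explicitly that the choice $Z=0$ is what makes the two necessary conditions line up.
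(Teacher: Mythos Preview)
Your approach is correct and matches the paper's: diagonalize, use the assumption $Y \succeq 0$ (in place of Lemma~\ref{lemma:psd}, which is unavailable without a strict inequality) to conclude that $L_E$ is invertible, use strict complementarity to force $\Gamma$ positive definite and the lower-right block of $\grad G(x)d$ to vanish, build the same $W$ with zero bottom-right block, and invoke \eqref{eq_non_negative} to match $2\inner{\jProd{W}{W}}{\Lambda}$ with $\inner{H(x,\Lambda)d}{d}$.

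One point to correct: SONC-NLP is a \emph{hypothesis} of the proposition, so your paragraph about needing $(x,Y)$ to be a local minimum of \eqref{eq:sdps} is both unnecessary and unjustified --- in particular, ``$x$ is a local minimum of \eqref{eq:sdp}'' is \emph{not} among the assumptions, so you cannot invoke it. Simply apply \eqref{eq:sonc_nlp} directly to your pair $(d,W)$. Also, the paper begins by noting that LICQ together with $Y \succeq 0$ yields nondegeneracy for \eqref{eq:sdp} via Corollary~\ref{col:licq_nondeg}; you should record this too, since nondegeneracy is part of the setting in which SONC-SDP is stated. Finally, your remark that ``any choice of $Z$ works'' in Proposition~\ref{prop:sosc_sdps_to_sdp} is not quite right: there too one must take $Z=0$, since a positive surplus $\inner{2Z^2}{\Gamma}$ on the NLP side would weaken, not strengthen, the inequality one is trying to establish on the SDP side.
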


\begin{proof}
Since $(x,Y,\Lambda)$ satisfies LICQ for \eqref{eq:sdps} and $Y$ is positive semidefinite, 
Corollary \ref{col:licq_nondeg} implies that $(x,\Lambda)$ satisfies nondegeneracy. 
Under the assumption that $(x,\Lambda)$ is strictly complementary, the only thing missing 
is to show that \eqref{eq:sonc_sdp} holds. To do so, we proceed as in Proposition 
\ref{prop:sosc_sdps_to_sdp}. We partition $G(x),Y, \Lambda$, $G(x)^\dagger$ and $\nabla G(x)d$ in blocks in 
exactly the same way. The only difference is that,  since \eqref{eq:sonc_nlp} 
does not hold strictly, we cannot make use of Lemma \ref{lemma:psd} in order to conclude 
that $L_E$ is invertible. Nevertheless, since we assume
that $Y$ is positive semidefinite, all the eigenvalues of $E$ are strictly positive anyway. So, as before, 
we can conclude that $L_E$ is an invertible operator, by Proposition \ref{prop:nonsigular}. 
Due to 
strict complementarity, we can also conclude that $\Gamma \in \PSDcone{m-k}$ is positive definite and 
that $C = 0$.  

All our ingredients are now 
in place and we can proceed exactly as in the proof of Proposition \ref{prop:sosc_sdps_to_sdp}. Namely, 
we have to prove that, given $d \in \mathcal{C}(x)$, the inequality 
$\inner{(\grad_x^2L(x,\Lambda) + H(x,\Lambda))d}{d} \geq 0$ holds. As before, 
the way to go is to craft a matrix $W$ satisfying both $\nabla G(x)d -2\jProd{Y}{W} = 0$ 
and $\inner{H(x,\Lambda)d}{d}  = 2\inner{\jProd{W}{W}}{Z} $. Then SONC-NLP will ensure 
that \eqref{eq:sonc_sdp} holds. Actually, it can be done by taking 
\begin{equation*}
W  = \begin{pmatrix}L_E^{-1} (A)/2 & E^{-1}B \\ B^\T E^{-1} & 0 \end{pmatrix}
\end{equation*}
and following the same line of arguments that leads to \eqref{eq_non_negative}.
\end{proof}

\begin{proposition}\label{prop:sonc_sdp_to_sdps}
Suppose that $(x,\Lambda) \in \Re^n \times \S^m$ is a KKT pair for \eqref{eq:sdp} satisfying 
SONC-SDP. Then, there exists $Y \in \S^m$ such 
that $(x,Y,\Lambda)$ is a KKT triple for \eqref{eq:sdps} satisfying  SONC-NLP.
\end{proposition}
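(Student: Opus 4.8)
The plan is to imitate the proof of Proposition~\ref{prop:sosc_sdp_to_sdps}, relaxing the strict inequalities to non-strict ones; along the way we use that SONC-SDP is stated for KKT pairs satisfying strict complementarity, while nondegeneracy will not be needed in this direction. First I would produce the slack variable by setting $Y = \sqrt{G(x)}$, the positive semidefinite square root of $G(x)$. Then \eqref{eq:kkt_sdps.1} is identical to \eqref{eq:kkt_sdp.1}, and \eqref{eq:kkt_sdps.3} holds because $\jProd{Y}{Y} = Y^2 = G(x)$. For \eqref{eq:kkt_sdps.2}, recall that \eqref{eq:kkt_sdp.4} together with $\Lambda \succeq 0$ and $G(x) \succeq 0$ gives $G(x)\Lambda = 0$ by Lemma~\ref{lem:matrices}(e); since $Y$ and $G(x) = Y^2$ have the same kernel, every column of $\Lambda$ lies in $\ker Y$, so $Y\Lambda = 0 = \Lambda Y$ and hence $\jProd{Y}{\Lambda} = 0$, exactly as in Proposition~\ref{prop:sdp_to_sdps}. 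Thus $(x,Y,\Lambda)$ is a KKT triple of \eqref{eq:sdps}.

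Next, after an orthogonal change of variables I would reduce to the block-diagonal normal form used in Propositions~\ref{prop:sosc_sdp_to_sdps} and~\ref{prop:sonc_sdps_to_sdp}: write $G(x) = \bigl(\begin{smallmatrix} D & 0 \\ 0 & 0 \end{smallmatrix}\bigr)$ with $D \in \PSDcone{k}$ diagonal and positive definite, $k = \matRank G(x)$; then $Y = \bigl(\begin{smallmatrix} E & 0 \\ 0 & 0 \end{smallmatrix}\bigr)$ with $E$ positive definite diagonal and $E^2 = D$, so that $L_E$ is invertible by Proposition~\ref{prop:nonsigular}; by strict complementarity $\Lambda = \bigl(\begin{smallmatrix} 0 & 0 \\ 0 & \Gamma \end{smallmatrix}\bigr)$ with $\Gamma \in \PSDcone{m-k}$ positive definite; and $G(x)^\dagger = \bigl(\begin{smallmatrix} D^{-1} & 0 \\ 0 & 0 \end{smallmatrix}\bigr)$. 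Now I would verify \eqref{eq:sonc_nlp}: fix $(v,W) \in \Re^n \times \S^m$ with $\grad G(x)v - 2\jProd{Y}{W} = 0$. If $v = 0$, then $\jProd{Y}{W} = 0$, and invertibility of $E$ and $L_E$ forces the top-left and off-diagonal blocks of $W$ to vanish, so $\inner{\jProd{W}{W}}{\Lambda} = \inner{W_3^2}{\Gamma} \geq 0$ and \eqref{eq:sonc_nlp} is immediate --- this is the one place where the non-strict version is genuinely easier than the argument for SOSC-NLP. If $v \neq 0$, then, since the range of $L_Y$ consists of matrices whose lower-right $(m-k)\times(m-k)$ block is zero, $\grad G(x)v = 2\jProd{Y}{W}$ must have the shape $\bigl(\begin{smallmatrix} A & B \\ B^\T & 0 \end{smallmatrix}\bigr)$; hence $\grad G(x)v \in \tanCone{G(x)}{\PSDcone{m}}$ and $\inner{\grad G(x)v}{\Lambda} = 0$, which by \eqref{eq:kkt_sdp.1} gives $0 = \inner{\grad G(x)v}{\Lambda} = v^\T\grad G(x)^*\Lambda = v^\T\grad f(x)$, so $v \in \mathcal{C}(x)$ and SONC-SDP yields $\inner{(\grad_x^2 L(x,\Lambda) + H(x,\Lambda))v}{v} \geq 0$.

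Finally, I would close the argument exactly as in Proposition~\ref{prop:sosc_sdp_to_sdps}: every solution $W$ of $\grad G(x)v - 2\jProd{Y}{W} = 0$ has the form $W_Z$ appearing there, and the chain of equalities ending at \eqref{eq_non_negative} gives $2\inner{\jProd{W}{W}}{\Lambda} - \inner{H(x,\Lambda)v}{v} = \inner{2Z^2}{\Gamma} \geq 0$; combining, $\inner{\grad_x^2 L(x,\Lambda)v}{v} + 2\inner{\jProd{W}{W}}{\Lambda} \geq \inner{(\grad_x^2 L(x,\Lambda) + H(x,\Lambda))v}{v} \geq 0$, which is \eqref{eq:sonc_nlp}, so $(x,Y,\Lambda)$ satisfies SONC-NLP. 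Since the proof is essentially a rerun of Proposition~\ref{prop:sosc_sdp_to_sdps}, there is no genuinely new difficulty; the one point that needs care (as in the sufficient-condition case) is making sure that the $v$-component of every admissible pair $(v,W)$ really lands in the critical cone $\mathcal{C}(x)$, so that SONC-SDP may be invoked --- and this is precisely what the analysis of the range of $L_Y$, which forces the block shape of $\grad G(x)v$, delivers.
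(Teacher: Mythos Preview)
Your proposal is correct and follows exactly the route the paper takes: choose $Y=\sqrt{G(x)}$ and rerun the argument of Proposition~\ref{prop:sosc_sdp_to_sdps} with the strict inequality relaxed to a non-strict one (the paper's proof is in fact just a one-sentence pointer to that proposition). The only small discrepancy is that the paper additionally invokes Corollary~\ref{col:licq_nondeg} to record that $(x,Y,\Lambda)$ satisfies LICQ, which uses the nondegeneracy hypothesis implicit in the SONC-SDP setting; you skip this, but since ``satisfying SONC-NLP'' refers to the inequality \eqref{eq:sonc_nlp} itself, your argument already establishes what the statement asks for.
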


\begin{proof}
It is enough to choose $Y$ to be $\sqrt{G(x)}$. If 
we do so, Corollary \ref{col:licq_nondeg} ensures that $(x,Y,\Lambda)$ satisfies 
LICQ. We now have to check that \eqref{eq:sonc_nlp} holds. For this,  
we can follow the proof of Proposition \ref{prop:sosc_sdp_to_sdps} by considering 
  \eqref{eq:sonc_sdp} instead of \eqref{eq:sosc_sdp.1}. No special 
considerations are needed for this case. 
\end{proof}

Assume that $(x,\Lambda)$ is a KKT pair of \eqref{eq:sdp} satisfying nondegeneracy and 
strict complementarity. Then, Proposition \ref{prop:sonc_sdps_to_sdp} gives an elementary route to prove 
that SONC-SDP  holds. This is because if we select $Y$ to be 
the positive semidefinite square root of $G(x)$, all the conditions of Proposition \ref{prop:sonc_sdps_to_sdp} are 
satisfied, which means that \eqref{eq:sonc_sdp} must hold. Moreover, if we were to derive second-order necessary 
conditions for \eqref{eq:sdp} from scratch, we could consider the following.

\begin{proposition}[A Necessary Condition via Slack Variables]\label{prop:sonc_slack}
Let $x \in \Re^n$ be a local minimum of \eqref{eq:sdp}.
Assume that $(x,\Lambda) \in \Re^n \times \S^m$ is a KKT pair for \eqref{eq:sdp} satisfying strict complementarity and
nondegeneracy. Then the following condition holds:
\begin{equation}
    \inner{\grad _x ^2 L(x,\Lambda)v}{v} + 2\inner{\jProd{W}{W}}{\Lambda} \geq 0 
\end{equation}
 for every $(v,W) \in \Re^n \times \S^m$ such that $\grad G(x)v - 2\jProd{\sqrt{G(x)}}{W} = 0$.
\end{proposition}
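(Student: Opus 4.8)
The plan is to pull the claim back to the ordinary equality-constrained problem \eqref{eq:sdps}, apply the textbook second-order necessary condition there, and read off the result. I would fix $Y = \sqrt{G(x)}$, the positive semidefinite square root of $G(x)$, and first record three structural facts. (1) The pair $(x,Y)$ is a local minimum of \eqref{eq:sdps}: since $x$ is a local minimum of \eqref{eq:sdp}, any $(x',Y')$ feasible for \eqref{eq:sdps} near $(x,Y)$ has $G(x') = Y'\circ Y' \succeq 0$, hence $x'$ is feasible for \eqref{eq:sdp} and close to $x$, so $f(x') \geq f(x)$. (2) The triple $(x,Y,\Lambda)$ is a KKT triple of \eqref{eq:sdps}: this is precisely Proposition \ref{prop:sdp_to_sdps}, whose construction uses exactly the positive semidefinite root; the only point needing an argument, the complementarity $\jProd{\Lambda}{Y} = 0$, holds because \eqref{eq:kkt_sdp.4} together with Lemma \ref{lem:matrices}(e) gives $G(x)\Lambda = 0$, and $\ker G(x) = \ker Y$ for the square root, so $Y\Lambda = 0$. (3) The pair $(x,Y)$ satisfies LICQ for \eqref{eq:sdps}: this is the ``nondegeneracy $\Rightarrow$ LICQ'' half of Corollary \ref{col:licq_nondeg}, applicable precisely because $Y = \sqrt{G(x)}$.

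With these three facts in hand, the remaining step is immediate. Since $(x,Y,\Lambda)$ is a KKT triple of the ordinary NLP \eqref{eq:sdps} at which $(x,Y)$ is a local minimum and LICQ holds, SONC-NLP, i.e.\ \eqref{eq:sonc_nlp}, holds at $(x,Y,\Lambda)$:
\[
\inner{\grad_x^2 L(x,\Lambda)v}{v} + 2\inner{\jProd{W}{W}}{\Lambda} \geq 0
\]
for every $(v,W)\in\Re^n\times\S^m$ with $\grad G(x)v - 2\jProd{Y}{W} = 0$. Substituting $Y = \sqrt{G(x)}$ into the linear constraint yields exactly the asserted inequality. As an alternative route, one could combine Propositions \ref{prop:sonc_sdps_to_sdp} and \ref{prop:sonc_sdp_to_sdps} --- the former upgrades SONC-NLP at $(x,\sqrt{G(x)},\Lambda)$ to SONC-SDP, the latter descends back --- but the direct route never leaves the slack-variable picture and, in particular, does not invoke the second-order necessary condition of \cite{shapiro97}.

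I expect no step here to be genuinely hard; the content is entirely in assembling Proposition \ref{prop:sdp_to_sdps}, Corollary \ref{col:licq_nondeg}, and the standard NLP second-order theory at the single slack point $Y = \sqrt{G(x)}$. The one subtlety worth flagging is that the multiplier appearing in SONC-NLP must be identified with the given $\Lambda$: this is legitimate because LICQ --- equivalently nondegeneracy, via Corollary \ref{col:licq_nondeg} --- forces the multiplier of \eqref{eq:sdps} at $(x,Y)$ to be unique, so $(x,Y,\Lambda)$ is the only KKT triple over $(x,Y)$. The standing hypotheses of strict complementarity and nondegeneracy enter exactly as in the discussion preceding this proposition and in Proposition \ref{prop:sonc_sdps_to_sdp}; what is actually consumed is nondegeneracy, by way of producing LICQ.
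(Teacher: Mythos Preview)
Your proposal is correct and follows exactly the approach the paper intends: fix $Y=\sqrt{G(x)}$, observe that $(x,Y)$ is a local minimum of \eqref{eq:sdps}, that $(x,Y,\Lambda)$ is a KKT triple (Proposition~\ref{prop:sdp_to_sdps}), that LICQ holds there (Corollary~\ref{col:licq_nondeg}), and then read off SONC-NLP. The paper states the proposition without a separate proof, relying precisely on this assembly of earlier results; your remark that strict complementarity is not actually consumed in this argument is also accurate.
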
 
Propositions \ref{prop:sonc_sdps_to_sdp} and \ref{prop:sonc_sdp_to_sdps} ensure that the condition 
above is equivalent to SONC-SDP. 
Comparing Propositions \ref{prop:sosc_slack} and \ref{prop:sonc_slack}, we see that the second-order 
conditions derived through the aid of slack variables have ``no-gap'' in the sense 
that, apart from regularity conditions, the only difference between them is the change 
from ``$>$'' to ``$\geq$''.


\section{Computational experiments}
\label{sec:comp}


Let us now examine the validity of the squared slack variables method for NSDP problems.
We tested the slack variables approach on a few simple problems. Our solver of 
choice was PENLAB \cite{Penlab}, which is based on PENNON \cite{Pennon} and uses an algorithm 
based on the augmented Lagrangian technique. As far as we know, 
PENLAB is the only open-source general nonlinear programming solver capable of handling 
nonlinear SDP constraints. Because of that, we have the chance of comparing the ``native'' approach 
against the slack variables approach using the same code. We ran PENLAB with the default parameters.
All the tests were done on a 
notebook with the following specs: Ubuntu 14.04, CPU Intel i7-4510U with 4 cores operating at 
2.0Ghz and 4GB of RAM.

In order to use an NLP solver to tackle \eqref{eq:sdp}, we have to select a 
vectorization strategy. We decided to vectorize an $n\times n$ symmetric matrix 
by transforming it into an $\frac{n(n+1)}{2}$ vector, such that the columns of 
the lower triangular part are stacked one on top of the other. For instance,  
the matrix $\bigl(\begin{smallmatrix} 1 & 2 \\ 2 & 3 \end{smallmatrix} \bigr)$ is transformed 
to the column vector $(1,2,3)^\T$.

\subsection{Modified Hock-Schittkowski problem 71}

There is a known suite of problems for testing nonlinear optimization 
problems collected by Hock and Schittkowski \cite{HS80,S09}. The problem below is a modification 
of problem 71 of \cite{S09} and it comes together with PENLAB. Both the constraints 
and the objective function are nonconvex.
 The problem has the following formulation:
\begin{equation}
  \label{eq:Hock}
  \tag{HS}
  \begin{array}{ll}
    \underset{x \in \Re^6}{\mbox{minimize}} & x_1x_4(x_1+x_2+x_3) + x_3 \\ 
    \mbox{subject to } & x_1x_2x_3x_4 - x_5 -25  = 0,\\[5pt]
    & x_1^2 + x_2^2 + x_3^2 + x_4^4 - x_6 - 40 = 0,\\[5pt]
    & \begin{pmatrix}x_1 & x_2 & 0 & 0 \\ x_2 & x_4 & x_2+x_3 & 0 \\
    0 & x_2+x_3 & x_4 & x_3 \\
    0 & 0 & x_3 & x_1
    \end{pmatrix}  \in \PSDcone{4},\\[20pt]
    & 1 \leq x_i \leq 5, \qquad i = 1,2,3,4 \\[5pt]
    & x_i \geq 0, \qquad i = 5,6.
  \end{array}
\end{equation}

We reformulate the problem \eqref{eq:Hock} by removing the positive 
semidefiniteness constraints  and adding a squared slack variable $Y$. We then 
test both formulations using PENLAB. The initial point is set 
to be $x = (5,5,5,5,0,0)$ and the slack variable to be the identity matrix $Y = I_4$. 
This produces infeasible points for 
both formulations. Nevertheless, PENLAB was able to solve the problem via both 
approaches. The results can be seen in Table \ref{table:hock}. The first three columns 
count the numbers of evaluations of the augmented Lagrangian function, its gradients and its Hessians, 
respectively. The fourth column is the number of outer iterations. The ``time'' column indicates 
the time in seconds as measured by PENLAB. The last column indicates the optimal 
value obtained. It seems that there were no significant differences in performance between 
both approaches.

\begin{table}
\centering
\caption{Slack vs ``native'' for \eqref{eq:Hock}}
\label{table:hock}
\begin{tabular}{@{}lcccccc@{}}
\toprule
       & functions & gradients & Hessians & iterations & time (s) & opt. value \\ \midrule
slack  & 110      & 57        & 44       & 13         & 0.54    & 87.7105 \\
native & 123      & 71        & 58       & 13         & 0.57    & 87.7105 \\ \bottomrule
\end{tabular}
\end{table}

\subsection{The closest correlation matrix problem --- simple version}\label{sec:ccm1}

Given an $m\times m$ symmetric matrix $H$ with diagonal entries equal to one, 
we want to find the element in 
$\PSDcone{m}$ which is closest to $H$ and 
has all  diagonal 
entries  also equal to one. The problem can be formulated as follows:

\begin{equation}
  \label{eq:cor1}
  \tag{Cor}
  \begin{array}{ll}
    \underset{X}{\mbox{minimize}} & \inner{X - H}{X-H} \\ 
    \mbox{subject to} & X_{ii} = 1 \qquad \forall \, i,\\
    & X  \in \PSDcone{m}.
  \end{array}
\end{equation}
This problem is convex and, due to its structure, we can use slack variables without increasing 
the number of variables. We have the following formulation:
\begin{equation}
  \label{eq:cor2}
  \tag{Cor-Slack}
  \begin{array}{ll}
    \underset{X}{\mbox{minimize}} & \inner{(\jProd{X}{X}) - H}{(\jProd{X}{X})- H} \\ 
    \mbox{subject to} & (\jProd{X}{X})_{ii} = 1 \qquad \forall \, i, \\
    & X  \in S^m.
  \end{array}
\end{equation}

In our experiments, we generated 100 symmetric matrices $H$ such that the diagonal 
elements are all $1$ and  other elements are uniform random numbers between~$-1$ and $1$. 
For both \ref{eq:cor1} and \ref{eq:cor2}, we used $X = I_m$ as an initial solution in all instances.
We solved problems with $m = 5,10,15,20$ and the results can be found in Table \ref{table:ex1}.
The columns ``mean'', ``min'' and ``max'' indicate, respectively,  
the mean, minimum and maximum of the running times in seconds of all instances.
For this problem, both formulations were able to solve all instances. 
We included the ``mean time'' column just to give an idea about the magnitude of the running time. In reality,
for fixed $m$, the running time oscillated highly among different instances, as  can be seen 
by the difference between the maximum and the minimum running times. 
We noted no significant difference between the optimal values obtained from both formulations.

\begin{table}[!htb]
\centering
\caption{Comparison between \ref{eq:cor1} and \ref{eq:cor2} }
\label{table:ex1}
\begin{tabular}{@{}lccc|ccc@{}}
\toprule
   & \multicolumn{3}{c}{\ref{eq:cor2} }    & \multicolumn{3}{c}{\ref{eq:cor1}}   \\ \midrule
$m$ & mean (s) & min (s) & max (s) & mean (s) & min (s) & max (s) \\ \midrule
5  & 0.090    & 0.060   & 0.140   & 0.201    & 0.130   & 0.250   \\
10 & 0.153    & 0.120   & 0.230   & 0.423    & 0.330   & 0.630   \\
15 & 0.287    & 0.210   & 0.430   & 1.306    & 1.020   & 1.950   \\
20 & 0.556    & 0.450   & 1.180   & 3.491    & 2.820   & 4.990    \\ \bottomrule
\end{tabular}
\end{table}

We tried, as much as possible, to 
implement gradients and Hessians of both problems in a similar way. As \ref{eq:cor1} is an 
example that comes with PENLAB, we also performed some minor tweaks to conform 
to that goal. Performance-wise, the formulation \ref{eq:cor2} seems to be competitive for this example. In most 
instances, \ref{eq:cor2} had a faster running time. In Figure \ref{fig:corr_20}, we show the comparison 
between running times, instance-by-instance,  for the case $m = 20$.

\begin{figure}
\centering
\includegraphics[scale=0.7]{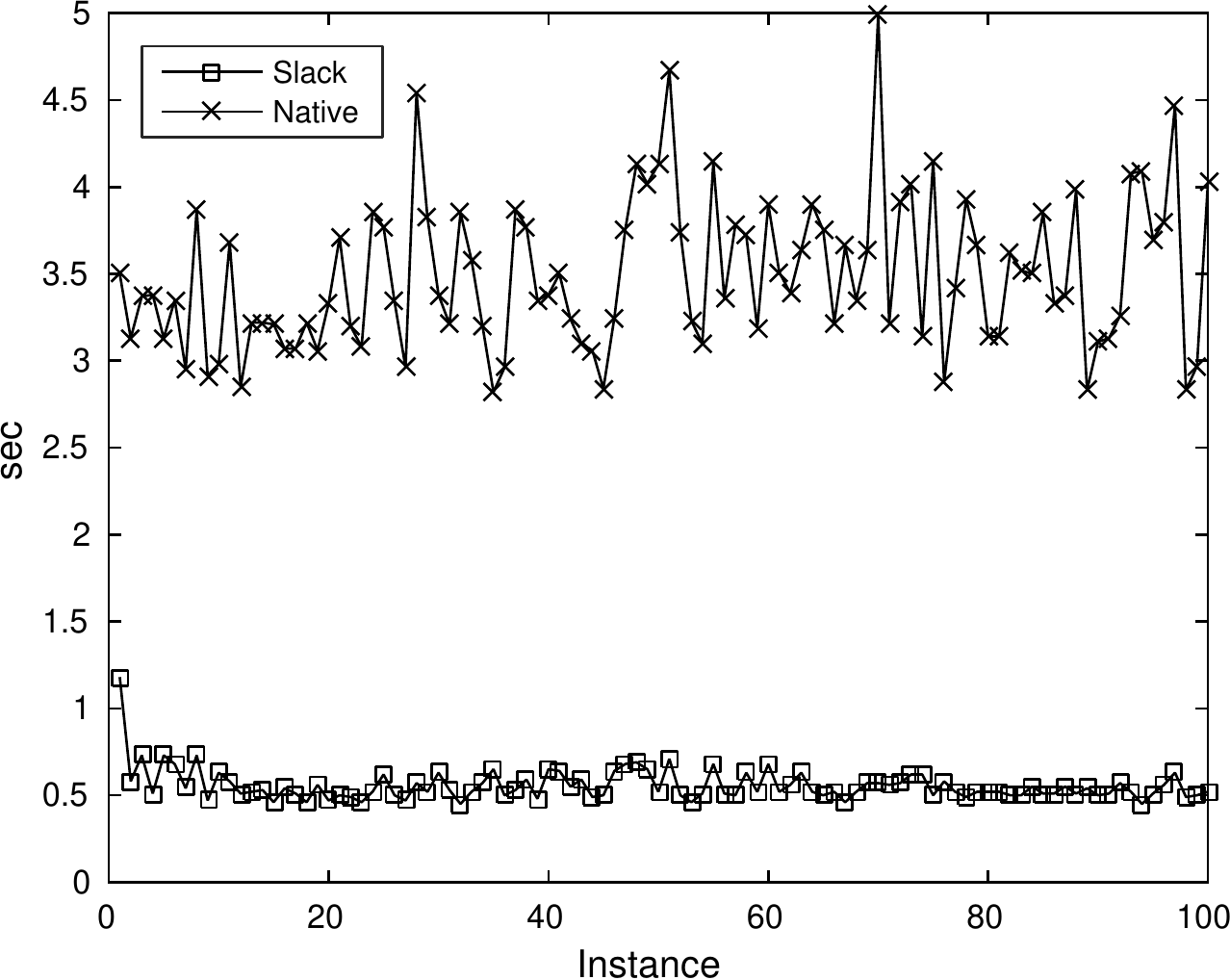}\caption{\ref{eq:cor1} vs. \ref{eq:cor2}. Instance-by-instance running times for $m = 20$.}\label{fig:corr_20}
\end{figure}

\subsection{The closest correlation matrix problem --- extended version}\label{sec:ccm2}
 We consider an extended formulation for \ref{eq:cor1} as suggested in 
 one of PENLAB's examples, with extra constraints to bound the eigenvalues 
 of the matrices:
 
 \begin{equation}
  \label{eq:cor_ext1}
  \tag{Cor-Ext}
  \begin{array}{ll}
    \underset{X,z}{\mbox{minimize }} & \inner{zX - H}{zX-H} \\ 
    \mbox{subject to } & zX_{ii} = 1 \qquad \forall \, i,\\
    &  I_m \preceq X \preceq \kappa I_m, 
  \end{array}
\end{equation}
where  $\kappa$ is some positive number greater than $1$ and 
the notation $X \succeq \kappa I_m $ means $X - \kappa I_m \in \PSDcone{m}$. This is 
a nonconvex problem, and using slack 
variables, we obtain the following formulation:
 \begin{equation}
  \label{eq:cor_ext2}
  \tag{Cor-Ext-Slack}
  \begin{array}{l}
    \begin{array}{ll}
      \underset{X,Y_1,Y_2,z}{\mbox{minimize }} & \inner{zX - H}{zX-H} \\ 
    \end{array}
    \\
    \begin{array}{llll}
      \mbox{subject to } & zX_{ii} & = 1 & \forall \, i,\\
      &   \kappa I_m - X & = \jProd{Y_1}{Y_1}, &\\ 
      & X - I_m & = \jProd{Y_2}{Y_2}. &
    \end{array}
  \end{array}
\end{equation}

In our experiments, we set $\kappa = 10$.
As before, we generated 100 symmetric matrices $H$ whose diagonal 
elements are all $1$ and  other elements are uniform random numbers between $-1$ and $1$. 
For \ref{eq:cor_ext1}, we used $z = 1$ and $X = I_m$ as initial points.
For \ref{eq:cor_ext2}, we used an infeasible starting point $z = 1$, $X = Y_2 = I_m$ and 
$Y_1 = 3I_m$.  We solved problems with $m = 5,10,15,20$ and the results can be found in Table \ref{table:ex2}.
The columns have the same meaning as in Section\ref{sec:ccm1}. This time, we saw 
a higher failure rate for the  formulation \ref{eq:cor_ext2}.  We tried a few different 
initial points, but the results stayed mostly the same.
The best results were
obtained for the case $m = 5$ and $m = 10$, where \ref{eq:cor_ext2} had a performance comparable to 
\ref{eq:cor_ext1}, although the latter seldom failed. For $m = 15$ and $m = 20$, 
\ref{eq:cor_ext2} was slower than \ref{eq:cor_ext1}, which is expected, because the number of 
variables increased significantly. However, it was still able 
to solve the majority of instances. In Figure \ref{fig:corr_ext}, we show the comparison 
of running times, instance-by-instance,  for the cases $m = 10$ and $m = 20$.

\begin{table}
\centering
\caption{Comparison between \ref{eq:cor_ext1} and \ref{eq:cor_ext2}}
\label{table:ex2}
\begin{tabular}{@{}lcccc|cccc@{}}
\toprule
   & \multicolumn{4}{c}{\ref{eq:cor_ext2}}           & \multicolumn{4}{c}{\ref{eq:cor_ext1}}          \\ \midrule
$m$   & mean (s) & min (s) & max (s) & fail & mean (s) & min (s) & max (s) & fail \\ \midrule
5  & 0.236    & 0.130   & 0.830   & 15   & 0.445    & 0.250   & 2.130   & 1    \\
10 & 0.741    & 0.420   & 2.580   & 3    & 1.206    & 0.580   & 7.300   & 0    \\
15 & 4.651    & 2.090   & 26.96   & 15   & 3.809    & 1.960   & 14.12   & 0    \\
20 & 24.32    & 15.20   & 69.34   & 8    & 9.288    & 5.150   & 36.81   & 0    \\ \bottomrule
\end{tabular}
\end{table}

\begin{figure}
\centering
\begin{subfigure}{}
\includegraphics[scale=0.7]{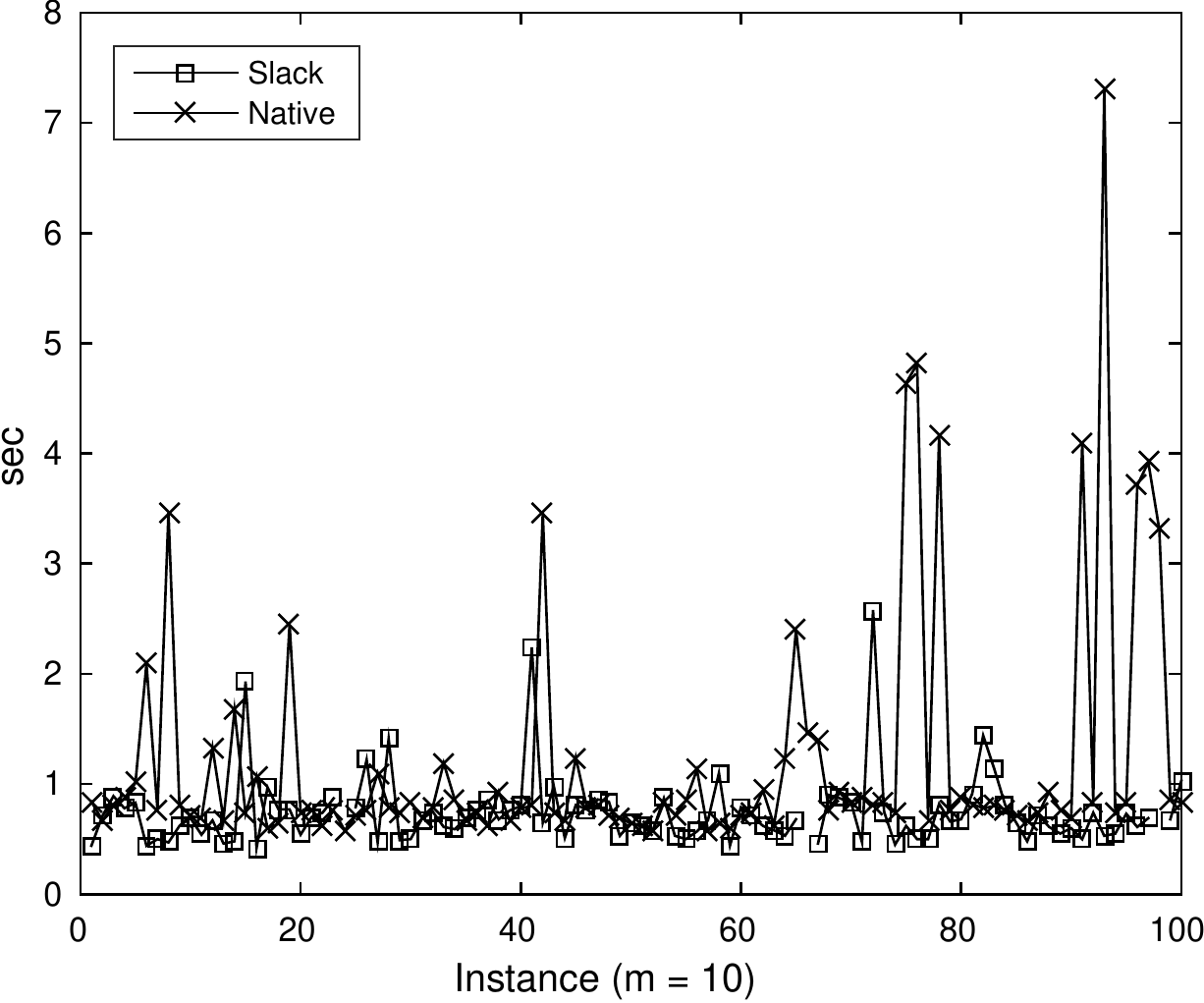}
\end{subfigure}
\begin{subfigure}{}
\includegraphics[scale=0.7]{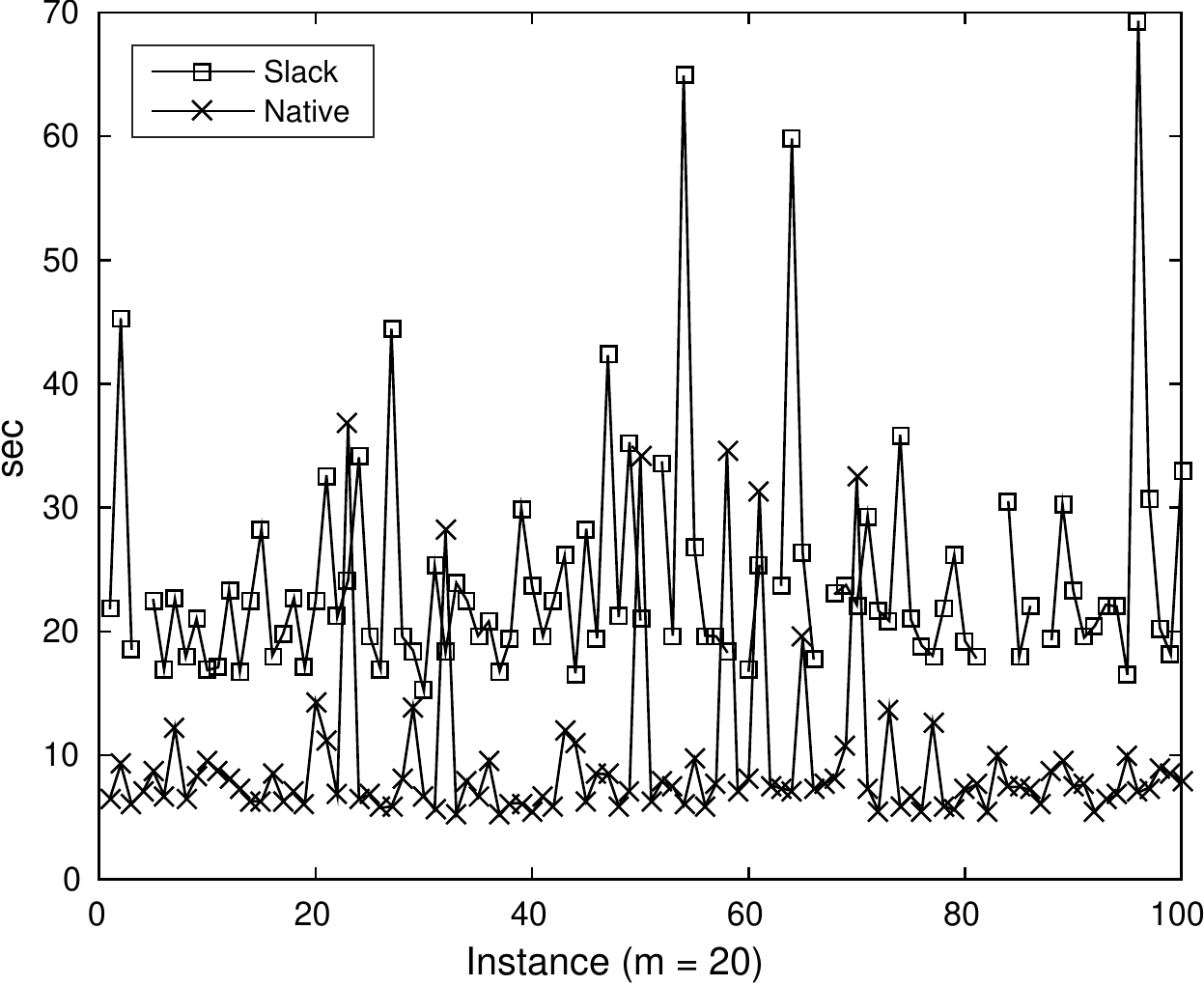}
\end{subfigure}
\caption{\ref{eq:cor_ext1} vs \ref{eq:cor_ext2}. Instance-by-instance running times for $m = 10$ and $m = 20$.
Failures are represented by omitting the corresponding running time.}\label{fig:corr_ext}
\end{figure}


\section{Final remarks}
\label{sec:conclusion}

In this article, we have shown that the optimality conditions for \eqref{eq:sdp} 
and \eqref{eq:sdps} are essentially the same. One intriguing part of this connection is 
the fact that the addition of squared slack variables seems to be enough to capture 
a great deal of the  structure of $\PSDcone{m}$. The natural progression from here is to expand 
the results to symmetric cones. In this article, we already saw some 
results that have a distinct Jordan-algebraic flavor, such as Lemma \ref{lemma:psd}.
It would be interesting to see how  these results can be further extended  and, 
whether clean proofs can be obtained without recoursing to the classification of simple Euclidean 
Jordan algebras. 

As for the computational results, we found it mildly surprising that the slack 
variables approach was able to outperform the ``native'' approach in many instances. This warrants 
a deeper investigation of whether this could be a reliable tool for attacking 
NSDPs that are not linear. These are precisely the ones that are not 
covered by the earlier work by Burer and Monteiro \cite{BM03,BM05}.


\bibliographystyle{spmpsci}
\bibliography{journal-abbrev,references}

\begin{thebibliography}{10}
\providecommand{\url}[1]{{#1}}
\providecommand{\urlprefix}{URL }
\expandafter\ifx\csname urlstyle\endcsname\relax
  \providecommand{\doi}[1]{DOI~\discretionary{}{}{}#1}\else
  \providecommand{\doi}{DOI~\discretionary{}{}{}\begingroup
  \urlstyle{rm}\Url}\fi

\bibitem{Barvinok95}
Barvinok, A.: Problems of distance geometry and convex properties of quadratic
  maps.
\newblock Discrete Comput. Geom. \textbf{13}(1), 189--202 (1995)

\bibitem{Ber09}
Bernstein, D.S.: Matrix Mathematics: Theory, Facts, and Formulas, 2nd edn.
\newblock Princeton University Press (2009)

\bibitem{Ber99}
Bertsekas, D.P.: Nonlinear Programming, 2nd edn.
\newblock Athena Scientific (1999)

\bibitem{BonnansCS}
Bonnans, J.F., Cominetti, R., Shapiro, A.: Second order optimality conditions
  based on parabolic second order tangent sets.
\newblock SIAM J. Optim. \textbf{9}(2), 466--492 (1999)

\bibitem{BM03}
Burer, S., Monteiro, R.D.: A nonlinear programming algorithm for solving
  semidefinite programs via low-rank factorization.
\newblock Math. Program. \textbf{95}(2), 329--357 (2003)

\bibitem{BM05}
Burer, S., Monteiro, R.D.: Local minima and convergence in low-rank
  semidefinite programming.
\newblock Math. Program. \textbf{103}(3), 427--444 (2005)

\bibitem{Cominneti1990}
Cominetti, R.: Metric regularity, tangent sets, and second-order optimality
  conditions.
\newblock Appl. Math. Optim. \textbf{21}(1), 265--287 (1990)

\bibitem{Penlab}
Fiala, J., Ko{\v c}vara, M., Stingl, M.: {PENLAB}: A matlab solver for
  nonlinear semidefinite optimization.
\newblock ArXiv e-prints  (2013)

\bibitem{FA00}
Forsgren, A.: Optimality conditions for nonconvex semidefinite programming.
\newblock Math. Program. \textbf{88}(1), 105--128 (2000)

\bibitem{FF14}
Fukuda, E.H., Fukushima, M.: The use of squared slack variables in nonlinear
  second-order cone programming.
\newblock Submitted (2015)

\bibitem{HS80}
Hock, W., Schittkowski, K.: Test examples for nonlinear programming codes.
\newblock J. Optim. Theory Appl. \textbf{30}(1), 127--129 (1980)

\bibitem{Jarre12}
Jarre, F.: Elementary optimality conditions for nonlinear {SDP}s.
\newblock In: Handbook on Semidefinite, Conic and Polynomial Optimization,
  \emph{International Series in Operations Research \& Management Science},
  vol. 166, pp. 455--470. Springer (2012)

\bibitem{Kawasaki88}
Kawasaki, H.: An envelope-like effect of infinitely many inequality constraints
  on second-order necessary conditions for minimization problems.
\newblock Math. Program. \textbf{41}(1-3), 73--96 (1988)

\bibitem{Pennon}
Ko\v{c}vara, M., Stingl, M.: {PENNON}: A code for convex nonlinear and
  semidefinite programming.
\newblock Optim. Methods Softw. \textbf{18}(3), 317--333 (2003)

\bibitem{NW99}
Nocedal, J., Wright, S.J.: Numerical Optimization, 1st edn.
\newblock Springer Verlag, New York (1999)

\bibitem{Pataki98}
Pataki, G.: On the rank of extreme matrices in semidefinite programs and the
  multiplicity of optimal eigenvalues.
\newblock Math. Oper. Res. \textbf{23}(2), 339--358 (1998)

\bibitem{pataki_handbook}
Pataki, G.: The geometry of semidefinite programming.
\newblock In: H.~Wolkowicz, R.~Saigal, L.~Vandenberghe (eds.) Handbook of
  Semidefinite Programming: Theory, Algorithms, and Applications. Kluwer
  Academic Publishers (2000)

\bibitem{Rob76}
Robinson, S.M.: Stability theory for systems of inequalities, part {II}:
  {D}ifferentiable nonlinear systems.
\newblock SIAM J. Numer. Anal. \textbf{13}(4), 497--513 (1976)

\bibitem{S09}
Schittkowski, K.: Test examples for nonlinear programming codes -- all problems
  from the {H}ock-{S}chittkowski-collection.
\newblock Tech. rep., Department of Computer Science, University of Bayreuth
  (2009)

\bibitem{shapiro97}
Shapiro, A.: First and second order analysis of nonlinear semidefinite
  programs.
\newblock Math. Program. \textbf{77}(1), 301--320 (1997)

\bibitem{Sturm2000}
Sturm, J.F.: Similarity and other spectral relations for symmetric cones.
\newblock Linear Algebra Appl. \textbf{312}(1-3), 135--154 (2000)

\bibitem{HYY15}
Yamashita, H., Yabe, H.: A survey of numerical methods for nonlinear
  semidefinite programming.
\newblock J. Oper. Res. Soc. Jpn. \textbf{58}(1), 24--60 (2015)

\end{thebibliography}

\end{document}